\newcommand{\del}{\partial}
 \def\mmat #1,#2,#3,#4,{\text{\small\arraycolsep=3pt $
\begin{pmatrix}#1&#2\\#3&#4\end{pmatrix}$}}
\newcommand {\supplus}{\mathop{{\supset}\llap{\raise
0.5pt\hbox{\normalfont\small+}\hskip 0.5pt}}}
\newcommand {\subplus}{\mathop{{\subset}\llap{\raise
0.5pt\hbox{\normalfont\small+}\hskip 0.5pt}}}
\newcommand {\divby} {\lower 0.15ex \hbox{\,\vdots\,}}
\newcommand {\Cee} {{\mathbb C}}
\newcommand {\Zee} {{\mathbb Z}}
\newcommand {\fb} {{\mathfrak{b}}}
\newcommand {\fc} {{\mathfrak{c}}}
\newcommand {\fder} {{\mathfrak{der}}} %
\newcommand {\fg} {{\mathfrak{g}}} %
\newcommand {\fgl} {{\mathfrak{gl}}} %
\newcommand {\fh} {{\mathfrak{h}}}
\newcommand {\fhei} {{\mathfrak{hei}}}
\newcommand {\fm} {{\mathfrak{m}}}
\newcommand {\fn} {{\mathfrak{n}}}
\newcommand {\fo} {{\mathfrak{o}}}
\newcommand {\fosp} {{\mathfrak{osp}}}
\newcommand {\fp} {{\mathfrak{p}}} %
\newcommand {\fpe} {{\mathfrak{pe}}} %
\newcommand {\fpo} {{\mathfrak{po}}}
\newcommand {\fpsl} {{\mathfrak{psl}}}
\newcommand {\fq} {{\mathfrak{q}}}
\newcommand {\fs} {{\mathfrak{s}}}
\newcommand {\fsl} {{\mathfrak{sl}}}
\newcommand {\fspe} {{\mathfrak{spe}}}
\newcommand {\fsq} {{\mathfrak{sq}}}
\newcommand {\fsvect} {{\mathfrak{svect}}}
\newcommand {\ft} {{\mathfrak{t}}}
\newcommand {\fv} {{\mathfrak{v}}} %
\newcommand {\fvect} {{\mathfrak{vect}}} %
\let\cal\relax
\newcommand {\cal} {\mathcal}
\newcommand {\cC} {{\cal C}}
\def \opname#1#2%
\newcommand \csname #1\endcsname {\mathop{\mathrm{#2}}\nolimits }}
\newcommand{\rmname}[1]
 {\expandafter\newcommand \csname #1\endcsname {\mathop{\mathrm{#1}}\nolimits}}
\newcommand{\rmnameii}[2]
 {\expandafter\newcommand \csname #1\endcsname {\mathop{\text{\rm #2}}\nolimits}}
\newcommand {\ev} {{\bar0}}
\newcommand {\bcdot} {\mathbin{\hbox{\raise.4ex\hbox{\bf.}}}} 
\numberwithin{equation}{section}
\newtheorem{Theorem}{Theorem}[section]
\newtheorem{Lemma}[Theorem]{Lemma}
\newtheorem{Proposition}[Theorem]{Proposition}
\begin{document}

\newcommand{\arXivNumber}{2503.03339}

\renewcommand{\PaperNumber}{047}

\FirstPageHeading

\ShortArticleName{Maximal Graded Solvable Subalgebras}

\ArticleName{Maximal Graded Solvable Subalgebras of Simple\\ Finite-Dimensional Vectorial Lie Superalgebras}

\Author{Irina SHCHEPOCHKINA}

\AuthorNameForHeading{I.~Shchepochkina}

\Address{Independent University of Moscow, 11 B.~Vlasievsky per., 119 002 Moscow, Russia}
\Email{\href{mailto:iparam53@gmail.com}{iparam53@gmail.com}}

\ArticleDates{Received March 06, 2025, in final form June 19, 2025; Published online June 24, 2025}

\Abstract{Here, in every simple finite-dimensional vectorial Lie superalgebra considered with the standard grading where every indeterminate is of degree~1, the maximal graded solvable subalgebras are classified over~$\mathbb{C}$.}

\Keywords{Lie superalgebra; Lie algebra; maximal subalgebra; solvable subalgebra}

\Classification{17B20; 17B30}

\section{Introduction}
Hereafter, the ground field is $\Cee$, all spaces are finite-dimensional. For the notation of simple Lie superalgebras and related basics of linear superalgebra, see \cite{Lo}.

In \cite{Dy}, Dynkin solved the problem (now classical) ``describe maximal subalgebras of simple finite-dimensional Lie algebras''.
In \cite{ShM}, the superanalog of the problem Dynkin solved for subalgebras of \textit{any} type, was solved for the maximal \textit{non-simple irreducible}  subalgebras in \textit{linear} (realised by means of linear operators or matrices) Lie superalgebras $\fgl(p|q)$ and $\fsl(p|q)$, in their queer analogs $\fq(n)$ and $\fsq(n)$, in $\fpe(n)$ that preserves the non-degenerate odd bilinear form, in its supertraceless subalgebra $\fspe(n)$, in a~version $\fpe_\lambda(n)$ of $\fpe(n)$, and in the simple Lie superalgebra $\fosp(m|2n)$ that preserves the non-degenerate even bilinear form.

In this note, there are described and classified maximal \textit{graded} solvable Lie subsuperalgebras in the finite-dimensional simple \textit{vectorial} (i.e., realised by means of vector fields) Lie superalgebras of the four analogs of the Cartan series of simple Lie algebras. These ambients are considered with their standard grading where every indeterminate is of degree~1. The particular case of $\fvect(0|2)\simeq \fsl(1|2)$ was solved in~\cite{S2} in terms of supermatrices. Here, the answer is the union of five propositions which require different prerequisites in various cases and are proved differently.\looseness=-1

\section{Related results}
S.~Lie was the first to try to classify maximal subalgebras of simple vectorial Lie algebras
with polynomial coefficients. To classify all maximal subalgebras seems
to be a~wild (hopeless) problem: there are too many such subalgebras. Several researchers distinguished various classes of maximal subalgebras important in applications (e.g.,~\cite{Og,So}) or dealt with cases feasible as, for example, are the classifications of the maximal graded subalgebras in finite-dimensional simple vectorial Lie superalgebras of Cartan type over $\Cee$, see \cite{BLM}, and over algebraically closed fields of characteristic $p>3$, see~\cite{BLLM}.

In \cite{S2}, the maximal, \textit{not necessarily graded}, solvable Lie subsuperalgebras of $\fgl(p|q)$ and $\fsl(p|q)$ are described. These maximal solvable Lie subsuperalgebras are sometimes larger and sometimes smaller (see \cite[Theorem~3]{S2}) than the super analog of the \textit{Borel subalgebra} defined by Penkov as any Lie subsuperalgebra spanned by the maximal torus and root vectors corresponding to either all positive or all negative roots, see~\cite{P}; recall, for comparison, that the Borel subalgebras of simple Lie algebras are maximal solvable. The case of $\fh'(0|4)\simeq \fpsl(2|2)$ solved here is not considered in \cite{S2}, where the closely related cases of $\fgl(2|2)$ and $\fsl(2|2)$ are solved, yielding, however, totally distinct types of maximal \textit{non-graded} solvable subalgebras, see Proposition~\ref{P5} and an elucidation in its proof.

Kuznetsov \cite{Ku} classified maximal graded solvable subalgebras of finite codimension in simple vectorial Lie algebras over algebraically closed fields of characteristic $p>3$: a problem analogous to the one solved here for the graded solvable Lie algebra $\fs$ in the case where $\fs_{-1}=0$, see Section~\ref{3cases}\,(i).

\section{The ambients} There are four series of simple finite-dimensional vectorial Lie superalgebras; let us recall their description, see \cite{Lo} containing also details of history.

Let $\Lambda(n)$ or $\Lambda(\xi)$ designate the Grassmann algebra generated by $\xi:=(\xi_1, \dots, \xi_n)$ with the \textit{standard} grading ($\deg \xi_i=1$ for all $i$) and parity $p$ equal to the degree modulo 2. Here, the maximal \textit{graded}\, solvable Lie subsuperalgebras of simple Lie superalgebras of vector fields on the $0|n$-dimensional supermanifold $\cC^{0|n}$ are classified, the ambient superalgebras being considered with their $\Zee$-grading corresponding to the standard grading of $\Lambda(n)$, the algebra of functions on $\cC^{0|n}$. This grading, called \textit{Weisfeiler} grading, is associated with the \textit{Weisfeiler filtration} constructed by means of the maximal subalgebra consisting of the fields vanishing at the origin, see, e.g., \cite{BGLLS}. Note that the Lie superalgebra $\fsvect(0|n)$ of divergence-free vector fields has a~filtered deformation $\widetilde{\fsvect}(0|n)$ which can be endowed with $\Zee$-gradings, but these gradings are not Weisfeiler gradings. In \cite{BLM}, a~natural $\Zee/n\Zee$-grading of $\widetilde{\fsvect}(0|n)$, induced by the standard $\Zee$-grading of $\fsvect(0|n)$, is considered and the maximal subalgebras of $\widetilde{\fsvect}(0|n)$ are described for~$n$ even.\looseness=-1

Let $\fvect(0|n):=\fder\, \Lambda(n)$, also denoted $\fvect(\xi)$, be the Lie superalgebra of superderivations of $ \Lambda(n)$, its elements can be expressed as $\sum f_i\del_i$, where $f_i\in\Lambda(n)$ and $\del_i\xi_j:=\delta_{ij}$. The symbol $\fb(\xi)$
designates the subalgebra $\fb\subset \fvect(\xi)$ realized in coordinates $\xi$.

Let $\fsvect(0|n):=\{D\in\fvect(0|n)\mid \Div D=0\}$, where the \textit{divergence} is
\[
\Div \left(\sum f_i\del_i\right):=\sum (-1)^{p(f_i)}\del_i(f_i).
\]

Let $\fpo(0|n)$ be the Poisson Lie superalgebra realized on the superspace $\Lambda(n)$ generated for $n=2k+l$ by
\begin{gather}\label{indH}
\xi:=(\xi_1, \dots, \xi_k), \qquad \eta:=(\eta_1, \dots, \eta_k), \qquad \zeta:=(\zeta_1, \dots, \zeta_l),
\end{gather}
with the Poisson bracket
\[
\{f, g\}:=(-1)^{p(f)}\left(\sum_{1\leq i\leq k}\left(\frac{\del f}{\del \xi_i}\frac{\del g}{\del {\eta_i}} +\frac{\del f}{\del \eta_i}\frac{\del g}{\del {\xi_i}}\right)+\sum_{1\leq j\leq l}\frac{\del f}{\del \zeta_j}\frac{\del g}{\del {\zeta_j}}\right).
\]
Let $\fh(0|n):=\Span\{H_f\mid f\in\Lambda(n)\}$, where{\samepage
\[
H_f:=(-1)^{p(f)}\left(\sum_{1\leq i\leq k}\left(\frac{\del f}{\del \xi_i}\del_{\eta_i} +\frac{\del f}{\del \eta_i}\del_ {\xi_i}\right)+\sum_{1\leq j\leq l}\frac{\del f}{\del \zeta_j}\del_ {\zeta_j}\right).
\]
Clearly, $\fh(0|n)$ is the quotient of $\fpo(0|n)$ modulo center (spanned by constants). Although in this note we only need linear algebra and no geometry, recall, for the sake of completeness, that $\fh(0|n)$ preserves a~non-degenerate super-anti-symmetric closed differential 2-form $\omega$ on a~$(0|n)$-dimensional supermanifold, see~\cite{Lo}.

}

Hereafter, $\fg$ designates one of the Lie superalgebras $\fvect(0|n)$ (resp.\ $\fsvect(0|n)$ or $\fh(0|n)$) in their standard gradings induced by the standard $\Zee$-grading of $\Lambda(n)$ for $n>1$ (resp.\ $n>2$ or $n>3$) when these Lie superalgebras are simple, except $\fh(0|n)$.
Note that the description of maximal graded solvable subalgebras in $\fh(0|n)$ yields the description of same type of subalgebras in the simple derived algebra $\fh'(0|n)$; we describe the elements $H_f$ of $\fh(0|n)$ in terms of generating functions~$f$.

Let $\Xi=\xi_1\cdots\xi_n$, let $t$ be an extra indeterminate of parity $n\pmod 2$ and $\deg(t)=0$. The Lie superalgebras $\widetilde{\fsvect}(0|2n):=(1+t\Xi)\fsvect(0|2n)$ are isomorphic for any $t\neq 0$, so we assume $t=1$. The standard $\Zee$-grading of $\fsvect(0|n)$ induces a~$\Zee/n$-grading of $\widetilde{\fsvect}(0|n)$, the degrees being represented by integers from $-1$ to $n-2$. The $\Zee$- or $\Zee/n$-grading of the Lie superalgebra~$\fs$ is said to be \textit{compatible} (with parity) if $\fs_\ev=\bigoplus_{i\geq 0} \fs_{2i}$. Clearly,
\begin{gather*}
\widetilde{\fsvect}_i(0|n)=\begin{cases}\fsvect_i(0|n)&\text{for $n$ even},\\
\fsvect_i(0|n)\otimes \Cee[t]&\text{for $n$ odd},\end{cases} \qquad \text{for $i\neq -1$,}\\ \widetilde{\fsvect}_{-1}(0|n)=\Span\bigl(\tilde{\del}_1,\dots,\tilde{\del}_n\bigr), \qquad \text{where $\tilde{\del}_i:=(1+t\Xi)\del_i$}.
\end{gather*}
Here, we will not consider deformations with odd parameter.

\begin{Lemma}\label{L1} Let $\fs:=\bigoplus_{i\geq -1} \fs_i$ be a~compatible with parity $\Zee$- or $\Zee/n\Zee$-grading of a~Lie superalgebra $\fs$. Then, $\fs$ is solvable if and only if so is $\fs_0$.
\end{Lemma}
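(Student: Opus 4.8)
The plan is to prove both implications by exploiting the grading. One direction is immediate: if $\fs$ is solvable then so is its subalgebra $\fs_0$. The substance is the converse, so assume $\fs_0$ is solvable and let me show $\fs$ is solvable.

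First I would isolate the "positive part" $\fn^+ := \bigoplus_{i\geq 1}\fs_i$ and the "negative part" $\fs_{-1}$, so that $\fs = \fs_{-1}\oplus\fs_0\oplus\fn^+$ as graded vector spaces. The key structural observations are: (a) $\fn^+$ is an ideal in $\fs_{\geq 0} := \fs_0\oplus\fn^+$, and moreover $\fn^+$ is nilpotent — indeed $[\fs_i,\fs_j]\subseteq\fs_{i+j}$ and the grading is bounded above (the superalgebras in question are finite-dimensional), so the lower central series of $\fn^+$ terminates; (b) since $\fn^+$ is nilpotent hence solvable and $\fs_{\geq 0}/\fn^+\cong\fs_0$ is solvable by hypothesis, the extension $\fs_{\geq 0}$ is solvable. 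This takes care of everything in nonnegative degrees.

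Next I would bring in the degree $-1$ piece. Here the compatibility-with-parity hypothesis does real work: since $\fs_\ev = \bigoplus_{i\geq 0}\fs_{2i}$, the subspace $\fs_{-1}$ is \emph{purely odd}. Therefore for any $x\in\fs_{-1}$ one has $[x,x] = 0$ (the bracket of an odd element with itself in degree $-2$, which is zero since the grading starts at $-1$; in fact degree $-2$ is absent), and more generally $[\fs_{-1},\fs_{-1}]\subseteq\fs_{-2}=0$, so $\fs_{-1}$ is an abelian (super)subalgebra. Consider the subspace $\fm := \fs_{-1}\oplus\fn^+$ together with its normalizer behaviour: $[\fs_{-1},\fs_{-1}]=0$, $[\fs_{-1},\fs_j]\subseteq\fs_{j-1}$, so $\fm$ need not be a subalgebra by itself, but $\fs_{-1}$ normalizes nothing lower. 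The cleaner route: observe that $\fs^{(1)}=[\fs,\fs]$ has zero component in degree $-1$, because the only way to land in degree $-1$ is $[\fs_0,\fs_{-1}]+[\fs_{-1},\fs_{\geq 0}]$, wait — that is nonzero in general. So instead I would argue: the ideal generated by $\fs_{-1}$ in $\fs$, call it $\fI$, satisfies $\fs/\fI$ is a quotient of $\fs_{\geq 0}$ hence solvable; and $\fI$ is built from $\fs_{-1}$ together with its iterated brackets, all of which, apart from the degree $-1$ slice, lie in $\fs_{\geq 0}$, whose relevant part $\fn^+$ is nilpotent. The point is that $\ad(\fs_{-1})$ raises degree by $-1$ but cannot be iterated past the bottom, so one shows $\fI^{(k)}$ eventually has no negative part and is then solvable by the nonnegative-degree analysis.

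The main obstacle, and the place to be careful, is precisely the interaction of $\fs_{-1}$ with the rest: unlike $\fn^+$, the piece $\fs_{-1}$ is not an ideal and generates a potentially large ideal under bracketing. The crux is to package the argument so that the derived series of $\fs$ is seen to descend — first killing or absorbing the degree $-1$ part after finitely many steps (using that $\fs_{-1}$ is odd and abelian, so it contributes nilpotently), then falling into $\fs_{\geq 0}$ where solvability is already established via the nilpotency of $\fn^+$ and the solvability of $\fs_0$. Once that reduction is set up cleanly the rest is bookkeeping with $[\fs_i,\fs_j]\subseteq\fs_{i+j}$ and finite-dimensionality; no hard computation is needed, only the right filtration of the derived series.
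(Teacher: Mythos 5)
Your treatment of the non-negative part is correct and is essentially half of the paper's argument (the paper works with $\fs_\ev=\fs_0\oplus\bigoplus_{i>0}\fs_{2i}$ rather than with $\fs_{\geq 0}$, but the point --- a nilpotent ideal concentrated in positive degrees sitting over a solvable degree-zero part --- is the same). The genuine gap is exactly where you write ``the main obstacle'': the interaction $[\fs_{-1},\fs_1]\subseteq\fs_0$. Your proposed fix --- pass to the ideal $\fI$ generated by $\fs_{-1}$ and claim that $\fI^{(k)}$ ``eventually has no negative part'' --- is circular: $\fI$ is again a graded subalgebra with $\fI_{-1}=\fs_{-1}\neq 0$ and $\fI_0\subseteq\fs_0$ solvable, i.e., exactly the configuration you started from, and nothing in the sketch makes its derived series lose the degree $-1$ slice. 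Indeed $\fs^{(k)}_{-1}=\bigl[\fs^{(k-1)}_0,\fs^{(k-1)}_{-1}\bigr]$, while $\fs^{(k)}_0$ keeps being replenished by $\bigl[\fs^{(k-1)}_{-1},\fs^{(k-1)}_1\bigr]$, so the degree-zero components of the derived series of $\fs$ are \emph{not} controlled by the derived series of $\fs_0$. The principle you invoke (``$\fs_{-1}$ is abelian, so it contributes nilpotently'') is false as a statement about graded Lie algebras: $\fsl(2)$ with its principal grading has $\fg_{\pm1}$ one-dimensional abelian and $\fg_0$ abelian, yet is simple.

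What saves the super case is solely that $\fs_{-1}$ and $\fs_{1}$ are \emph{odd} (so that, e.g., $[x,[x,y]]=\tfrac12[[x,x],y]=0$ for odd $x\in\fs_{-1}$), and extracting solvability from this parity information is precisely the content of the result the paper invokes in place of an elementary computation: Kac's Proposition~1.3.3, that a Lie superalgebra is solvable if and only if its even part is. That statement rests on the description of irreducible modules over solvable Lie superalgebras (Sergeev) and fails in characteristic~2, so it cannot follow from bookkeeping with $[\fs_i,\fs_j]\subseteq\fs_{i+j}$ alone. The paper's proof is then two lines: compatibility gives $\fs_\ev=\fs_0\oplus\fn$ with $\fn:=\bigoplus_{i>0}\fs_{2i}$ a nilpotent ideal, so $\fs_\ev$ is solvable if and only if $\fs_0$ is, and Kac's theorem transfers this to $\fs$. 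To complete your argument you would have to either cite that theorem or reprove it; the filtration you describe does not substitute for it.
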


\begin{proof} In \cite[Proposition~1.3.3]{K}, based on the description of irreducible modules over solvable Lie superalgebras proved later, see~\cite{Sg}, it is proved that ``the Lie superalgebra $\fg$ is solvable if and only if its even component $\fg_\ev$ is solvable''. (Note that this statement, true over any field of characteristic not~2, does not hold over fields of characteristic 2; for examples where $\fg$ is simple while $\fg_\ev$ is solvable, see \cite{BGLLS, BGL}.) Since the grading of $\fs$ is compatible with parity, then $\fs_\ev$ contains a~nilpotent ideal $\fn:=\bigoplus_{i>0} \fs_{2i}$ and $\fs_\ev/\fn=\fs_0$. So, $\fs$~is solvable if and only if so is~$\fs_0$.
\end{proof}

\section{The three possible cases}\label{3cases}
Let $\fs$ be a~maximal graded solvable subalgebra of $\fg$, i.e., $\fs_i=\fs\cap\fg_i$ for all $i$. Let us consider the possible cases:
(i)~$\fs_{-1}=0$, (ii)~$\fs_{-1}=\fg_{-1}$, and (iii)~$0\neq V=\fs_{-1}\subsetneq\fg_{-1}$. Since each case is determined by its $(-1)$st component, we accordingly designate the maximal solvable subalgebra of $\fg$ as (i) $\fm\fs0^{\fg}$, (ii) $\fm\fs\fc^{\fg}$, (iii) $\fm\fs V^{\fg}$, where $\fm\fs$ means ``maximal solvable'' (subalgebra),
with~0, or complete, or equal to $V$ partial $(-1)$st component, respectively.

{\bf (i) $\boldsymbol{\fs_{-1}=0}$.} Obviously, if $\fs$ is maximal, then
\[
\text{$\fs_0$ is a~maximal solvable subalgebra in $\fg_0$ and $\fs_i=\fg_i$ for $i>0$}.
\]
Let $(\fm\fs0^{\fg})_0$ be a~maximal solvable
subalgebra in $\fg_0$; set
\begin{gather}\label{ms}
\fm\fs0^{\fg}:= (\fm\fs0^{\fg})_0\oplus\Bigl(\bigoplus_{i>0} \fg_i\Bigr).
\end{gather}

\begin{Proposition}\label{P1} Lie superalgebras $\fm\fs0^{\fvect(0|n)}$ and $\fm\fs0^{\fsvect(0|n)}$ for $n>2$, as well as $\fm\fs0^{\fh(0|n)}$ for $n>4$, are maximal solvable in $\fvect(0|n)$, $\fsvect(0|n)$ and $\fh(0|n)$, respectively. Moreover, the algebra isomorphic to $\fm\fs0^{\fsvect(0|2n)}$ is maximal solvable in $\widetilde{\fsvect}(0|2n)$.
\end{Proposition}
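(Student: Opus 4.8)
The plan is to exploit the "extremal" structure of the subalgebra defined in \eqref{ms}: it has the full positive part $\bigoplus_{i>0}\fg_i$ and a maximal solvable piece in degree $0$. First I would check that $\fm\fs0^{\fg}$ is indeed solvable. By Lemma~\ref{L1} applied to this graded subalgebra (which is non-negatively graded, hence trivially "compatible" once we note it sits inside $\fg$ with $\fs_{-1}=0$, so the hypothesis on $\fs_{-1}$ is vacuous and the proof of Lemma~\ref{L1} goes through verbatim), $\fm\fs0^{\fg}$ is solvable iff its $0$-component $(\fm\fs0^{\fg})_0$ is, which holds by construction. One subtlety: Lemma~\ref{L1} as stated assumes the grading runs over $i\ge -1$; here it runs over $i\ge 0$, so I would remark that the argument (the ideal $\fn=\bigoplus_{i>0}\fs_{2i}$ is nilpotent, quotient is $\fs_0$) is unchanged.

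Next, maximality. Suppose $\fh$ is a graded solvable subalgebra with $\fm\fs0^{\fg}\subseteq\fh\subseteq\fg$. Since $\fg_i\subseteq\fh$ for $i>0$ already, and $\fh$ is graded, the only room to grow is in degrees $-1$ and $0$. In degree $0$, $\fh_0$ is solvable (being a subalgebra of the solvable $\fh$, using the char $0$ fact that subalgebras of solvable Lie superalgebras are solvable) and contains the maximal solvable $(\fm\fs0^{\fg})_0$, hence $\fh_0=(\fm\fs0^{\fg})_0$. So it remains to rule out $\fh_{-1}\neq 0$. If $0\neq X\in\fh_{-1}$, then $\ad(X)$ maps $\fg_1\subseteq\fh$ into $\fh_0=(\fm\fs0^{\fg})_0$; the key point is that as we repeatedly bracket a nonzero degree $-1$ element with the whole of $\bigoplus_{i>0}\fg_i\subseteq\fh$, we generate in degree $0$ strictly more than $(\fm\fs0^{\fg})_0$ — indeed one should be able to generate all of $\fg_0$ — contradicting $\fh_0=(\fm\fs0^{\fg})_0$ unless $(\fm\fs0^{\fg})_0=\fg_0$, which is false since $\fg_0$ is not solvable for the ranges of $n$ in question. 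This uses transitivity of the grading: $[\fg_{-1},\fg_i]=\fg_{i-1}$ for $i\ge 1$ (standard for $\fvect$, $\fsvect$, $\fh$ in the standard grading), so $[X,\fg_1]$ is already a nonzero $\fg_0$-submodule of $\fg_0$, and a short case analysis of the reductive part of $\fg_0$ shows a nonzero $\fg_0$-submodule of $\fg_0$ cannot lie in a proper solvable subalgebra.

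For the $\widetilde{\fsvect}(0|2n)$ assertion, I would use that for $2n$ even one has the componentwise identification $\widetilde{\fsvect}_i(0|2n)=\fsvect_i(0|2n)$ for $i\neq -1$, while $\widetilde{\fsvect}_{-1}(0|2n)=\Span(\tilde\partial_1,\dots,\tilde\partial_n)$ with $\tilde\partial_i=(1+\Xi)\partial_i$. Thus $\widetilde{\fsvect}(0|2n)$ and $\fsvect(0|2n)$ have isomorphic non-negative graded parts, and the subalgebra $(\fm\fs0)_0\oplus\bigoplus_{i>0}\widetilde{\fsvect}_i(0|2n)$ is the image of $\fm\fs0^{\fsvect(0|2n)}$ under this identification; its solvability follows again from Lemma~\ref{L1}, and its maximality follows by the same argument, since the only enlargement possible is in degree $-1$, and bracketing a nonzero $\tilde\partial$-combination with $\widetilde{\fsvect}_1$ again produces a nonzero $\fg_0$-submodule of $\fg_0=\fsvect_0(0|2n)$.

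The main obstacle I anticipate is the last step of the maximality argument: precisely controlling, for each of the three families $\fvect(0|n)$, $\fsvect(0|n)$, $\fh(0|n)$, the $\fg_0$-module structure of $\fg_0$ itself and verifying that no nonzero $\fg_0$-submodule of $\fg_0$ is contained in a maximal solvable subalgebra $(\fm\fs0^{\fg})_0$ — equivalently, that $\fg_0$ is \emph{not} solvable and that its only solvable ideal is properly contained in any maximal solvable subalgebra. Here $\fg_0=\fgl(n)$, $\fsl(n)$ (up to center) respectively in the $\fvect$, $\fsvect$ cases, and $\fg_0$ is a version of $\fsl(n)$ or $\fo(n)\oplus(\text{something})$ in the $\fh$ case; in each case the derived (semisimple) part is nonzero exactly under the stated bounds ($n>2$ for $\fsvect$, $n>4$ for $\fh$), which is why those bounds appear. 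Once that module-theoretic fact is in hand, the proof is essentially the two-paragraph argument above.
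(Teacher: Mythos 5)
Your overall reduction coincides with the paper's: solvability follows from Lemma~\ref{L1}; a putative larger graded solvable subalgebra $\ft\supseteq\fm\fs0^{\fg}$ must satisfy $\ft_0=(\fm\fs0^{\fg})_0$ because the latter is maximal solvable in $\fg_0$; and so everything hinges on showing that $[\ft_{-1},\fg_1]\subseteq(\fm\fs0^{\fg})_0$ forces $\ft_{-1}=0$. The gap is in how you justify this last, decisive step. The set $[X,\fg_1]$ for a single $0\neq X\in\fg_{-1}$ is \emph{not} a $\fg_0$-submodule of $\fg_0$: for $Y\in\fg_0$ one has $[Y,[X,Z]]=[[Y,X],Z]\pm[X,[Y,Z]]$, and the first term lies in $[\fg_{-1},\fg_1]$ but not in $[X,\fg_1]$ unless the line $\langle X\rangle$ is $\fg_0$-invariant, which it is not. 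Moreover, even the auxiliary claim that a nonzero $\fg_0$-submodule of $\fg_0$ cannot sit inside a maximal solvable subalgebra fails in the $\fvect$ case, where $\fg_0=\fgl(n)$ has its one-dimensional centre as a submodule contained in \emph{every} maximal solvable subalgebra. So the abstract module-theoretic shortcut does not close the argument, and the appeal to ``repeated bracketing generating all of $\fg_0$'' is not available either, since $\ft$ is assumed to be a subalgebra and all such brackets land back in $\ft_0=(\fm\fs0^{\fg})_0$ by hypothesis; there is nothing to ``generate''.

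What is actually required --- and what the paper supplies --- is an explicit verification against the concrete $\fs_0$ of Table~\ref{tab3}. For $\fg=\fvect(0|n)$ or $\fsvect(0|n)$ with $n>2$ one takes the test vectors $v_1=\xi_1\xi_2\del_3$ and $v_i=\xi_1\xi_i\del_2$ for $i>2$: writing $X=\sum c_j\del_j$, the condition $[X,v_1]\in\fs_0$ kills $c_1,c_2$ (the resulting $\xi_2\del_3$, $\xi_1\del_3$ are strictly upper-triangular, while $\fs_0$ was deliberately chosen lower-triangular) and then $[X,v_i]\in\fs_0$ kills the remaining $c_i$. For $\fh(0|n)$ one uses the subspaces $\langle\xi\rangle\otimes\Lambda^2(\langle\eta\rangle)$ and $\Lambda^3(\langle\eta\rangle)$ (resp.\ $\langle\zeta\rangle\otimes\Lambda^2(\langle\eta\rangle)$) of $\fg_1$. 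Your treatment of $\widetilde{\fsvect}(0|2n)$ via the identification of the non-negative parts is fine, but it inherits the same unproved step. In short: you located the crux correctly, but the route you propose through it does not work; the verification must be done coordinate by coordinate.
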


\begin{proof} From the classical results on maximal solvable subalgebras of $\fsl(n)$ and $\fo(n)$, we see that the bases of $\fs_0$ and $\fg_0$ are as follows (for a~reason the $\fs_0$ are chosen differently --- sometimes upper-triangular, sometimes lower-triangular --- see clarifications in Section~\ref{Defs}):

\begin{table}[!ht]\centering\renewcommand{\arraystretch}{1.2}
\footnotesize

\begin{tabular}{|l |l |l |l|}
\hline
$\fg$&$\fg_0$&basis of $\fg_0$&basis of $\fs_0$\\
\hline
$\fvect(0|n)=\fvect(\xi)$&$\fgl(n)$&$\xi_i\del_j$, where $1\leq i, j\leq n$&$\xi_i\del_j$ for $i\geq j$\\
\hline
$\fsvect(0|n)=\fsvect(\xi)$&$\fsl(n)$&$\xi_i\del_j$ for $i\neq j$ and $h_i:=\xi_i\del_i-\xi_{i+1}\del_{i+1}$&$\xi_i\del_j$ for $i> j$ and\\
 &&for $i\in\{1, \dots, n-1\}$&$h_i$ for $i\in\{1, \dots, n-1\}$\\
\hline
$\fh(0|2k)=\fh(\xi, \eta)$&$\fo(2k)$&$\xi_i\xi_j$ and $\eta_i\eta_j$ for $i> j$,&\\
&& $\xi_i\eta_j$ where
$1\leq i, j\leq k$&$\xi_i\xi_j$ for $i>j$,  $\xi_i\eta_j$ for $i\leq j$\\
\hline
$\fh(0|2k+1)=\fh(\xi, \eta, \zeta)$&$\fo(2k+1)$&$\xi_i\xi_j$ and $\eta_i\eta_j$ for $i> j$,\ $\xi_i\eta_j$\ where &$\xi_i\xi_j$ for $i>j$,  $\xi_i\eta_j$ for $i\leq j$ \\
 &&$1\leq i, j\leq k$ and $\xi_i\zeta$,  $\eta_i\zeta$ where $1\leq i\leq k$&and $\xi_i\zeta$ for $i\in\{1, \dots, k\}$\\
\hline
\end{tabular}
\caption{}\label{tab3}
\end{table}

Assume the contrary:
\begin{gather}\label{ft}
\text{let $\ft$ be a~solvable graded subsuperalgebra in $\fg$ containing $\fs$.}
\end{gather}
Since $\fs_0$ is a~maximal solvable subalgebra in $\fg_0$, then $\ft_+:=\bigoplus_{i\geq 0} \ft_i=\fs$ by Lemma~\ref{L1}.

Hence,
\begin{gather}\label{*}
[\ft_{-1}, \fg_1]\subset \fs_0.
\end{gather}

If $n>2$, then for $\fg=\fvect(0|n)$ and $\fsvect(0|n)$, the component $\fg_1$ contains the elements $v_1:=\xi_1\xi_2\del_3$ and $v_i:=\xi_1\xi_i\del_2$ for any $i>2$.
Clearly, the condition
\begin{gather}\label{test}
\text{$[\ft_{-1}, \fv_i]\subset \fs_0$ \qquad for all $i$}
\end{gather}
implies that $\ft_{-1}=0$, so $\ft=\fs$.

Let $\langle S\rangle:=\Span(S)$ designate the space spanned by the set $S$; let $\Lambda^i(\langle S\rangle )$ be the component of degree~$i$ in $\Lambda(\langle S\rangle)$.
For the subspaces $\langle \xi\rangle \otimes \Lambda^2(\langle \eta\rangle )$ and $\Lambda^3(\langle \eta\rangle )$ in the case of $\fg=\fh(0|2k)$ for $k\geq 3$ as well as for the subspaces $\langle \xi\rangle \otimes \Lambda^2(\langle \eta\rangle )$ and $\langle \zeta\rangle \otimes \Lambda^2(\langle \eta\rangle )$ in the case of $\fg=\fh(0|2k+1)$ for $k\geq 2$, the condition~\eqref{*} implies $\ft_{-1}=0$.
\end{proof}

\section{Definitions needed in cases (ii) and (iii) (see \cite{Shch})}\label{Defs} The graded Lie (super)algebra
$\fb=\bigoplus_{k \geqslant -d} \fb_k$
is said to be \textit{transitive} if for all~$k\geq 0$ we have
\begin{equation*}
\{x\in \fb_k \mid [x,\fb_{-}]=0\}=0, \qquad \text{where $\fb_{-}:=\oplus _{k <0}\,\fb_k$}.
\end{equation*}

The maximal transitive $\Zee$-graded Lie (super)algebra $\fg=\bigoplus_{k \geqslant -1}\ \fg_k$, whose non-positive part is the given $\fg_{-1}\oplus \fg_0$, is called the \textit{Cartan prolong}
--- the result of Cartan prolongation --- of the pair $(\fg_{-1},\fg_0)$ and is denoted by $(\fg_{-1},\fg_0)_*$.

Let $\fg := \fvect(0|n)$ or $\fsvect(0|n)$ or $\fh(0|n)$. Let, besides, $\fb_{-1} \subset \fg_{-1}$ be a non-zero subspace in $\fg_{-1}$, and $\fb_0 \subset \fg_0$ a subalgebra (not necessarily maximal) preserving this $\fb_{-1}$. Clearly, $\fb_{\leq 0}: = \fb_{-1} \oplus
\fb_0$ is a subalgebra in $\fg$. Define the \textit{prolong of
$\fb_{\leq 0}$ in $\fg$} as $\Zee$-graded subalgebra $\fb = \bigoplus_{k\geq -1}\fb_k$, where
$
\fb_k := \{D\in \fg_k \mid [D, \fb_{-1}] \subset \fb_{k-1}\ \text{for all} \ k \geq 1\}$.

If $\fb_{-1} = \fg_{-1}$, we get the
Cartan prolong
$(\fg_{-1}, \fb_0)_*$. We retain the analog of this notation for any non-zero subspace $\fb_{-1}\subset\fg_{-1}$. If $\fb_0$ is a maximal subalgebra of $\fg_0$ preserving $\fb_{-1}$, let us shrink the notation $(\fb_{-1}, \fb_0)_*$ to $(\fb_{-1})_*$.

\begin{Lemma}\label{L3} \quad\samepage
\begin{enumerate}\itemsep=0pt
\item[$1)$] The bracket in $\fg=\fvect(0|n)$ for $n> 2$ $($resp.\ $\fsvect(0|n)$ for $n>2$ or $\fh(0|n)$ for $n> 4)$, induces a~Lie superalgebra structure on the space $(\fb_{-1}, \fb_{0})_*$.

\item[$2)$] The Lie superalgebra $(\fb_{-1}, \fb_{0})_*$
is maximal among all graded Lie subsuperalgebras $\fb$ of $\fg$ with the given components $\fb_{-1}$ and $\fb_{0}$; the Lie superalgebra $(\fb_{-1})_*$ is maximal among all subalgebras of $\fg$ with given negative component.
\end{enumerate}
 \end{Lemma}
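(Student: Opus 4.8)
The plan is to prove both claims by unwinding the definitions and invoking the transitivity/maximality features built into the construction. First I would treat part~1): I need to check that $(\fb_{-1},\fb_0)_* = \bigoplus_{k\ge -1}\fb_k$ with $\fb_k := \{D\in\fg_k \mid [D,\fb_{-1}]\subset \fb_{k-1}\}$ is closed under the bracket of $\fg$. The non-trivial brackets to test are $[\fb_j,\fb_k]$ for $j,k\ge 1$: given $D\in\fb_j$, $E\in\fb_k$, and $v\in\fb_{-1}$, the super-Jacobi identity gives $[[D,E],v] = [D,[E,v]] \pm [E,[D,v]]$, and since $[E,v]\in\fb_{k-1}$ and $[D,v]\in\fb_{j-1}$, induction on $j+k$ (using that $[D,\fb_{-1}]\subset\fb_{j-1}$ and the inductive hypothesis that $\fb_{\le j+k-1}$ is a subalgebra) shows $[[D,E],v]\in\fb_{j+k-1}$, hence $[D,E]\in\fb_{j+k}$. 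The only point requiring care is the base of the induction and the mixed brackets $[\fb_{-1},\fb_k]$ and $[\fb_0,\fb_k]$, which land in $\fb_{k-1}$ and $\fb_k$ respectively directly from the defining condition together with the fact that $\fb_0$ preserves $\fb_{-1}$; here one uses that $\fg$ itself is a genuine Lie superalgebra in the stated ranges of $n$ (this is exactly why the hypotheses $n>2$, resp.\ $n>4$, appear). So part~1) is essentially a bookkeeping induction with the super-Jacobi identity.

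For part~2), the maximality of $(\fb_{-1},\fb_0)_*$ among graded subalgebras with the prescribed $(-1)$st and $0$th components: suppose $\fb'=\bigoplus_{k\ge -1}\fb'_k$ is a graded subalgebra of $\fg$ with $\fb'_{-1}=\fb_{-1}$ and $\fb'_0=\fb_0$. I claim $\fb'_k\subset\fb_k$ for all $k\ge 1$ by induction: if $D\in\fb'_k$ then $[D,\fb_{-1}]=[D,\fb'_{-1}]\subset\fb'_{k-1}\subset\fb_{k-1}$ by the inductive hypothesis, which is precisely the condition defining membership in $\fb_k$. Hence $\fb'\subset(\fb_{-1},\fb_0)_*$, proving maximality --- in fact $(\fb_{-1},\fb_0)_*$ is the unique largest such subalgebra. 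For the second assertion, when $\fb_0$ is chosen maximal in $\fg_0$ among subalgebras preserving $\fb_{-1}$, write $(\fb_{-1})_*:=(\fb_{-1},\fb_0)_*$ and let $\fb''$ be any graded subalgebra of $\fg$ with $\fb''_{-1}=\fb_{-1}$. Then $\fb''_0$ is a subalgebra of $\fg_0$ and, since $\fb''$ is a subalgebra and $[\fb''_0,\fb''_{-1}]\subset\fb''_{-1}=\fb_{-1}$, it preserves $\fb_{-1}$; by maximality of $\fb_0$ we get $\fb''_0\subset\fb_0$. But then $\fb''$ has $(-1)$st component $\fb_{-1}$ and $0$th component contained in $\fb_0$, so by the first part of~2) (applied with the smaller zero-component, or simply by the same induction noting $[D,\fb_{-1}]\subset\fb''_{k-1}\subset\fb_{k-1}$) we obtain $\fb''_k\subset\fb_k$ for all $k\ge 1$, hence $\fb''\subset(\fb_{-1})_*$.

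The main obstacle I anticipate is not in part~2), which is a clean two-line induction once the setup is right, but in being careful in part~1) about \emph{why} the Cartan-prolongation construction stays inside $\fg$ and does not leak out --- i.e., that each $\fb_k$ is genuinely a subspace of $\fg_k$ and the bracket computed in the prolong agrees with the bracket in $\fg$. Since here $\fb_k$ is \emph{defined} as a subspace of $\fg_k$ (unlike the abstract Cartan prolong, which is built from scratch as a maximal transitive extension and only afterwards identified with a subalgebra of $\fg$), the coincidence of brackets is automatic, and what really needs the dimension restrictions on $n$ is the ambient fact that $\fg=\fvect(0|n)$, $\fsvect(0|n)$, or $\fh(0|n)$ is itself a Lie superalgebra generated in the relevant low degrees (so that the transitivity condition propagates and no degenerate coincidences occur in degrees $0$ and $-1$). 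I would therefore state the induction cleanly, flag the use of super-Jacobi for the $[\fb_j,\fb_k]$ step with $j,k\ge 1$, and remark that the cases $n\le 2$ (resp.\ $n\le 4$) are excluded precisely because there $\fg$ fails to be simple or the low-degree structure degenerates, so the prolong need not behave as claimed.
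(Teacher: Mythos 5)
Your argument is correct and is exactly the route the paper takes: the paper dispatches claim~1) as ``a direct corollary of the Jacobi identity'' and calls claim~2) ``evident,'' and your induction on degree with the super-Jacobi identity (for closure under the bracket) together with the degree-by-degree inclusion $\fb'_k\subset\fb_k$ (for maximality) is precisely the bookkeeping those phrases compress. Nothing in your write-up diverges from the paper's approach; you have simply made the omitted details explicit.
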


\begin{proof} Cf.~\cite{BLM}. Claim~1) is a~direct corollary of the Jacobi identity. Claim~2) is evident. \end{proof}

We would like to select the maximal solvable subalgebra $\fs_0$ of $\fg_0$ so it would
look as in Table~\ref{tab3}. Therefore, in case (iii), if $\fs_{-1}= V$, we'd like to numerate the basis $\fg_{-1}$ beginning with that of~$V$.

If $\fg$ is of series $\fvect$ or $\fsvect$, we assume that $V$ is spanned by $\del_1, \dots , \del_k$. But then the subalgebra of $\fg_0$ preserving $V$ contains the operators of the form $\xi_j\del_i$ for $i\le k$ and $j>k$, as commuting with $V$. Therefore, in these cases, it is convenient for us to select the \textit{lower}-triangular subalgebra in $\fg_0$ for $\fs_0$.

If $\fg$ is of series $\fh$, the element $\xi$ acts as $\del \eta$. Therefore, if $\xi_i$ for $i=1,\dots ,k$ lie in $V\cap V^\perp$ (hence, $\eta_i$ for $i=1,\dots ,k$ do not lie in~$V$), and there are also the $\xi_j$, $\eta_j$ with $i, j >k$ (wherever they lie), then $\xi_i\eta_j$ with $i\le k$ and $j>k$ must lie in the subalgebra of $\fg_0$ preserving $V$. Therefore, in this case, it is convenient for us to select the \textit{upper}-triangular subalgebra in $\fg_0$ for $\fs_0$.

{\bf (ii) $\boldsymbol{\fs_{-1}=\fg_{-1}}$.}

\begin{Proposition}\label{P2}\quad
\begin{enumerate}\itemsep=0pt
\item[$1)$] Let $\fg=\fvect(0|n)$ for $n> 2$ $($resp.\ $\fsvect(0|n)$ for $n>2$ or $\fh(0|n)$ for $n> 4)$, and let $\fs_0$ be the maximal solvable Lie subalgebra in $\fg_0$, and $\fm\fs\fc^\fg:=(\fg_{-1}, \fs_0)_*$.
The Lie superalgebra $\fm\fs\fc^\fg$ is maximal solvable in $\fg$.
\item[$2)$] There are no maximal solvable Lie subalgebras $\fs$ of $\fg=\widetilde \fsvect(0|2n)$ with $\fs_{-1}=\fg_{-1}$.
\end{enumerate}
\end{Proposition}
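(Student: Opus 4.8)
The plan divides into the two parts; part~1) is a short deduction from Lemmas~\ref{L1} and~\ref{L3}, while part~2) uses the deformed structure of $\widetilde\fsvect(0|2n)$ in an essential way.

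\textbf{Part 1).} First, $\fm\fs\fc^\fg=(\fg_{-1},\fs_0)_*$ is a graded Lie subsuperalgebra of $\fg$ by Lemma~\ref{L3}\,1). The standard grading of $\fg$ is compatible with parity --- the component $\fg_k$ consists of fields $\sum f_i\del_i$ with $\deg f_i=k+1$, hence of parity $\equiv k\pmod 2$ --- so the grading of $\fm\fs\fc^\fg$ is compatible with parity and its zeroth component $\fs_0$ is solvable; hence $\fm\fs\fc^\fg$ is solvable by Lemma~\ref{L1}. For maximality, suppose $\ft$ is a graded solvable subalgebra of $\fg$ with $\fm\fs\fc^\fg\subseteq\ft$. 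Then $\ft_{-1}=\fg_{-1}$ (caught between $\fg_{-1}$ and $\fg_{-1}$), and $\ft_0$, a subalgebra of the solvable $\ft$, is solvable and contains $\fs_0$. Since $\fg_0$ --- which is $\fgl(n)$, $\fsl(n)$ or $\fo(n)$ --- is not solvable in the indicated ranges while $\fs_0$ is a maximal solvable subalgebra of it, necessarily $\ft_0=\fs_0$; then $\ft\subseteq(\fg_{-1},\fs_0)_*=\fm\fs\fc^\fg$ by Lemma~\ref{L3}\,2), so $\ft=\fm\fs\fc^\fg$.

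\textbf{Part 2).} The plan is to prove the stronger assertion that $\widetilde\fsvect(0|2n)$ has \emph{no} graded solvable subalgebra $\fs$ at all with $\fs_{-1}=\fg_{-1}$; then there is certainly no maximal one. The crucial feature distinguishing $\widetilde\fsvect(0|2n)$ from $\fsvect(0|2n)$ is that $\fg_{-1}=\Span(\tilde\del_1,\dots,\tilde\del_{2n})$, with $\tilde\del_i=(1+\Xi)\del_i$, is not abelian: a direct use of the bracket of vector fields gives $[\tilde\del_i,\tilde\del_j]=\del_i(\Xi)\,\del_j+\del_j(\Xi)\,\del_i$, a nonzero element of $\fg_{2n-2}$ (since $\deg(\del_i(\Xi)\,\del_j)=2n-2$ and $-1-1\equiv 2n-2\pmod{2n}$). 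For $1\le i\le j\le 2n$ these are $n(2n+1)$ linearly independent elements of $\fg_{2n-2}=\fsvect_{2n-2}(0|2n)$, and $\dim\fsvect_{2n-2}(0|2n)=(2n)^2-n(2n-1)=n(2n+1)$ (the divergence maps $\fvect_{2n-2}(0|2n)$ onto all functions of degree $2n-2$), so they span $\fg_{2n-2}$. Hence every graded subalgebra $\fs$ with $\fs_{-1}=\fg_{-1}$ has $\fg_{2n-2}=[\fg_{-1},\fg_{-1}]\subseteq\fs$.

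Next I would cascade downwards through the cyclic grading. For $v$ of positive degree one has $[\Xi\del_a,v]=0$ (its $\fvect(0|2n)$-degree would exceed $2n-1$), so $[\tilde\del_a,v]=[\del_a,v]$ and therefore $[\fg_{-1},\fg_k]=[\fsvect_{-1},\fsvect_k(0|2n)]$ for $k\ge 1$. The key input is that $[\fsvect_{-1},\fsvect_k(0|2n)]=\fsvect_{k-1}(0|2n)$ for $1\le k\le 2n-2$. This holds because each $\fsvect_m(0|2n)$, $-1\le m\le 2n-2$, is an irreducible $\fsl(2n)$-module: the $\fgl(2n)$-module $\fvect_m(0|2n)=\Lambda^{m+1}W\otimes W^*$ (with $W=\Cee^{2n}=\langle\xi_1,\dots,\xi_{2n}\rangle$) has at most two composition factors, and $\fsvect_m(0|2n)$ is the kernel of the divergence, which projects onto the ``trace'' factor $\cong\Lambda^m W$, leaving the irreducible complement; so $[\fsvect_{-1},\fsvect_k(0|2n)]$, a nonzero ($\fsvect(0|2n)$ is a Cartan prolong, hence transitive) $\fsl(2n)$-submodule of the irreducible $\fsvect_{k-1}(0|2n)$, is all of it. Granting this, $\fg_{2n-2}\subseteq\fs$ forces successively $\fg_{2n-3}\subseteq\fs$, $\fg_{2n-4}\subseteq\fs$, \dots, and finally $\fg_0=\fsl(2n)=[\fg_{-1},\fg_1]\subseteq\fs_0$; but then $\fs\supseteq\fsl(2n)$ is not solvable, a contradiction.

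The steps of part~1) are routine. In part~2) the genuine work is the two surjectivity facts: that the brackets $[\tilde\del_i,\tilde\del_j]$ span all of $\fg_{2n-2}$ (reduced above to a dimension count and linear independence) and, above all, the irreducibility of each $\fsvect_m(0|2n)$ over $\fsl(2n)$, which drives the cascade; the bracket identities $[\tilde\del_i,\tilde\del_j]=\del_i(\Xi)\del_j+\del_j(\Xi)\del_i$ and $[\Xi\del_a,v]=0$ are mechanical once one tracks signs and uses that $\Xi$ annihilates everything of positive degree under multiplication.
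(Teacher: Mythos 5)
Your argument is correct, and for part~1) it is exactly the paper's (one-line) proof spelled out: solvability of $(\fg_{-1},\fs_0)_*$ from Lemma~\ref{L1} via compatibility of the grading with parity, and maximality from the maximality of $\fs_0$ in $\fg_0$ combined with Lemma~\ref{L3}\,2). For part~2) you use the same underlying idea as the paper --- the $(-1)$st component of $\widetilde\fsvect(0|2n)$ generates the whole (simple, hence non-solvable) algebra, so no solvable graded subalgebra can contain all of it --- but where the paper simply cites \cite[Lemma~2.4]{BLM} for the generation fact, you prove it from scratch: the computation $[\tilde\del_i,\tilde\del_j]=\del_i(\Xi)\del_j+\del_j(\Xi)\del_i$, the dimension count showing these span $\fg_{2n-2}$, and the downward cascade $[\fsvect_{-1},\fsvect_k]=\fsvect_{k-1}$ driven by the $\fsl(2n)$-irreducibility of each $\fsvect_m(0|2n)$ are all sound. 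The net effect is a self-contained proof where the paper relies on an external reference; nothing is missing.
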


\begin{proof}
1)~is a~direct corollary of Lemma~\ref{L1}.
2)~Observe that the $(-1)$st component of $\widetilde{\fsvect}(0|2n)$ generates the whole algebra, so $\widetilde{\fsvect}(0|2n)$ has no maximal solvable subalgebra containing the whole $(-1)$st component, see \cite[Lemma~2.4]{BLM}.
\end{proof}

{\bf Explicit descriptions of $\boldsymbol{\fm\fs\fc^\fg}$.}
If $\fs_0$ is chosen as in Table~\ref{tab3}, then for a~basis of the degree-$(m-1)$ component $\fs_{m-1}^{\fvect(0|n)}$ of $\fm\fs\fc^{\fvect(0|n)}$ one can take monomials
\[
\xi_{i_1}\cdots\xi_{i_m}\del_j, \qquad \text{where $i_a\geq j$ for any $a\in\{1, \dots, m\}$}.
\]

Clearly,
\[
\fm\fs\fc^{\fsvect(0|n)}=\fm\fs\fc^{\fvect(0|n)}\cap\fsvect(0|n).
\]

For a basis of the $(m-1)$st component $\fm\fs\fc_{m-1}^{\fh(0|2k)}$, we can take monomials
\[
\xi_{i_1}\cdots\xi_{i_m}\eta_j, \  \text{where $i_a\leq j$ for any $a\in\{1, \dots, m\}$},  \qquad \text{and}\qquad
\xi_{i_1}\cdots\xi_{i_{m+1}} \  \text{for any $i_a$}.
\]

For a~basis of the $(m-1)$st component $\fm\fs\fc_{m-1}^{\fh(0|2k+1)}$, we can take the union of a basis of $\fs_{m-1}^{\fh(0|2k)}$ with the monomials
\[
\xi_{i_1}\cdots\xi_{i_m}\zeta\qquad \text{for any $i_a$}.
\]
Let $\fhei(\zeta):=\fhei(0|1)$ be the Heisenberg Lie superalgebra spanned by an odd $\zeta$ and even $z$ with the multiplication table
\[
[z, \fhei(\zeta)]=0,\qquad [\zeta, \zeta]=z.
\]
Let $\overline{\Lambda}(\xi)$ be the quotient of $\Lambda(\xi)$ modulo constants. Then,
\begin{gather}
\fm\fs\fc^{\fh(0|2k)}\simeq \overline{\Lambda}(\xi)\inplus\fm\fs\fc^{\fvect(0|k)}(\xi),\nonumber\\
\fm\fs\fc^{\fh(0|2k+1)}\simeq (\overline{\Lambda}(\xi)\otimes\fhei(\zeta))\inplus\fm\fs\fc^{\fvect(0|k)}(\xi).\label{(*)}
\end{gather}
The generating functions whose degree with respect to $\eta$ is $\leq 1$ span a~Lie subsuperalgebra $\fg$ in $\fh(0|2k)$ isomorphic to $\bar\Lambda(\xi)\inplus\fvect(\xi)$. Note that $\fg$ is precisely the degree-0 component of $\fh(\xi,\eta)$ in the non-standard grading $\deg_{ns} (f):=\deg (f)-1$, where $\deg \xi_i=0$ while $\deg \eta_i=1$ for all $i$.

Note that if we take $\fs_0$ as in Table~\ref{tab3}, then $\fs_0$ becomes a~subalgebra in the Lie superalgebra $\ft_0=\Span(\xi_i\xi_j, \xi_i\eta_j)_{1\leq i, j\leq k}$ preserving the subspace $V=\Span(\xi_i)_{1\leq i\leq k}$. Let us introduce the structure of the cotangent bundle on the space $\Span(\xi_i, \eta_j)_{1\leq i, j\leq k}$ having identified $\eta_i$ with $\xi_i^*$. Then, the Cartan prolong $\ft:=(\fg_{-1},\ft_0)_*$ is the Lie superalgebra preserving this structure of the cotangent bundle.
This remark clarifies the geometrical meaning of formulas~\eqref{(*)}.

{\bf (iii) $\boldsymbol{0\neq\fs_{-1}\subsetneq\fg_{-1}}$ and $\boldsymbol{\fg=\fvect(0|n)}$ or $\boldsymbol{\fsvect(0|n)$ or $\widetilde\fsvect(0|n)}$.}
Then, for any solvable subalgebra $\ft_0\subset \fg_0$ preserving $\fs_{-1}$, we can chose a~maximal solvable subalgebra $\fs_0\subset \fg_0$ so that $\ft_0\subset \fs_0$ and $\fs_0$ also preserves $\fs_{-1}$. Therefore, up to a~renumbering of indeterminates, we can assume that for $1<k<n$ we have
\[
\fs_{-1}=\begin{cases}\langle \del_1,\dots, \del_k\rangle&\text{for $\fvect$ or $\fsvect$,}\\
\langle \del_1,\dots, \del_k\rangle\otimes (1+t\Xi)&\text{for $\widetilde\fsvect(0|n)$},
\end{cases}
\]
and $\fs_0$ is as described in Table~\ref{tab3}. Then, we have the following direct sum, each of the summands is a~subalgebra, moreover, the sum of the first and second summands is semi-direct (the second is an ideal), the sum of the first and third summands is semi-direct (the third is an ideal), but the sum of the second and the third summands is only the sum of subspaces (recall the definition~\eqref{ms} of $\fm\fs0^{\fg}$)
\begin{gather}
(\fs_{-1}, \fs_{0})_*^{\fvect(0|n)}= \fm\fs\fc^{\fvect(0|k)}(\xi_1,\dots,\xi_k)\oplus \bigl(\bigoplus_{i>0} \Lambda^i(\xi_{k+1},\dots,\xi_n)\bigr)\otimes \fvect(\xi_1,\dots,\xi_k) \nonumber\\
\hphantom{(\fs_{-1}, \fs_{0})_*^{\fvect(0|n)}=}{}
\oplus\Lambda(\xi_{1},\dots,\xi_k)\otimes \fm\fs0^{\fvect(\xi_{k+1},\, \dots, \, \xi_n)},\nonumber\\
(\fs_{-1}, \fs_{0})_*^{\fsvect(0|n)}= (\fs_{-1}, \fs_{0})_*^{\fvect(0|n)}\cap\fsvect(0|n),\nonumber\\
(\fs_{-1}, \fs_{0})_*^{\widetilde\fsvect(0|2n)}= (\fs_{-1}, \fs_{0})_*^{\fvect(0|2n)}\otimes(1+\Xi)\label{(**)}.
\end{gather}

\begin{Proposition}\label{P3} Let $\fg=\fvect(0|n)$ or $\fg=\fsvect(0|n)$ for $n\geq 3$ or $\widetilde{\fsvect}(0|2n)$ for $n\geq 2$. Let $\fs_{-1}\subsetneq\fg_{-1}$ be any non-zero proper subspace and $\fs_{0}$ the maximal solvable Lie subalgebra in $\fg_0$ preserving $\fs_{-1}$. Then, $(\fs_{-1}, \fs_0)_*^\fg$ is maximal solvable in $\fg$.
\end{Proposition}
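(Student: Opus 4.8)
## Proof proposal for Proposition \ref{P3}

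The plan is to show that the graded subalgebra $\fb:=(\fs_{-1},\fs_0)_*^\fg$ is both solvable and maximal among graded solvable subalgebras with $(-1)$st component equal to $\fs_{-1}$, and that no graded solvable subalgebra strictly containing it can exist. Solvability is the easy half: by Lemma~\ref{L3}(1) the space $(\fs_{-1},\fs_0)_*$ carries an honest Lie superalgebra structure, and its non-positive part is $\fs_{-1}\oplus\fs_0$ with $\fs_0$ solvable by construction; the decomposition \eqref{(**)} exhibits $\fb$ explicitly as built from copies of $\fm\fs\fc^{\fvect(0|k)}$, $\fm\fs0^{\fvect(0|n-k)}$ and tensor factors that are nilpotent (Grassmann algebras in the complementary variables), so one checks directly that the whole thing is solvable --- alternatively one invokes Lemma~\ref{L1} after noting the grading is compatible with parity and $\fb_0=\fs_0$ is solvable. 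Maximality among graded subalgebras with the prescribed $\fb_{-1}=\fs_{-1}$ and $\fb_0=\fs_0$ is Lemma~\ref{L3}(2). So the content is the remaining step.

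The main step is to rule out, for a putative graded solvable $\ft\supsetneq\fb$ in $\fg$, any enlargement of the negative or zeroth component. First, $\ft_0$ must still preserve $\ft_{-1}$ and be solvable, and since $\fs_0$ was chosen maximal solvable in $\fg_0$ preserving $\fs_{-1}$, the only way $\ft_0\supsetneq\fs_0$ is if $\ft_{-1}\supsetneq\fs_{-1}$; so it suffices to handle the case $\ft_{-1}\supsetneq\fs_{-1}$. Pick $\del\in\ft_{-1}\setminus\fs_{-1}$; after the renumbering fixed before the proposition we may assume $\del$ has a component along some $\del_\ell$ with $\ell>k$, i.e. (modulo $\fs_{-1}$) we may take $\del=\del_\ell$. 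Now I would test solvability of $\ft$ against elements of $\fg_1$ exactly as in the proof of Proposition~\ref{P1}: the bracket $[\del_\ell,\,\xi_a\xi_\ell\del_b]=\xi_a\del_b$ (for suitable indices $a,b$ with $b\le k$, $a>k$), together with $[\del_\ell,\xi_\ell\xi_m\del_b]=\xi_m\del_b$, forces into $\ft_0$ operators $\xi_a\del_b$ with $a>k\ge b$ that already lie in $\fs_0$ --- but iterating and combining with the elements $\xi_1\xi_i\del_2$ and $\xi_1\xi_2\del_3$ used in Proposition~\ref{P1} produces, inside the subalgebra generated by $\ft_{-1}$ and $\ft_0$, a copy of $\fvect$ on a set of $\ge 2$ variables, whose zeroth component contains a non-solvable $\fsl$, contradicting solvability of $\ft$. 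Concretely: enlarging $\fs_{-1}$ to include $\del_\ell$ makes $\fb_0$ grow to a subalgebra of $\fg_0$ preserving the larger subspace $\fs_{-1}+\langle\del_\ell\rangle$, and since $\fs_0$ was maximal solvable preserving $\fs_{-1}$ one shows the prolongation forces $\ft_0$ to contain the maximal solvable subalgebra preserving the bigger space \emph{plus} extra root vectors, hence a non-solvable subalgebra. The case $\fg=\fsvect(0|n)$ follows by intersecting with $\fsvect$ (as in \eqref{(**)}), and $\fg=\widetilde\fsvect(0|2n)$ by the twist $\otimes(1+\Xi)$, since conjugation by $1+\Xi$ is a graded-preserving isomorphism onto $\widetilde\fsvect$ that carries solvable subalgebras to solvable subalgebras.

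A cleaner way to organize the last paragraph, which I would actually follow: observe that any graded solvable $\ft\supsetneq\fb$ has $\ft_{-1}$ a subspace of $\fg_{-1}$ strictly between $\fs_{-1}$ and $\fg_{-1}$, or equal to $\fg_{-1}$. If $\ft_{-1}=\fg_{-1}$, then by Proposition~\ref{P2}(1) (for $\fvect$, $\fsvect$) the maximal solvable subalgebra with full $(-1)$st component is $\fm\fs\fc^\fg=(\fg_{-1},\fs_0')_*$ for the maximal solvable $\fs_0'\subset\fg_0$; but $\fs_0'$ does \emph{not} preserve $\fs_{-1}$ in general, and in fact $\fb\not\subset\fm\fs\fc^\fg$ because $\fb$ contains, by \eqref{(**)}, the whole summand $\Lambda(\xi_1,\dots,\xi_k)\otimes\fm\fs0^{\fvect(\xi_{k+1},\dots,\xi_n)}$ whose degree-$0$ part is a lower-triangular-in-the-last-$(n-k)$-variables algebra incompatible with any single fixed maximal solvable $\fs_0'\subset\fg_0$; so this sub-case cannot occur. (For $\widetilde\fsvect$, Proposition~\ref{P2}(2) says there is no solvable subalgebra with full $(-1)$st component at all, so this sub-case is vacuous.) Thus $\fs_{-1}\subsetneq\ft_{-1}\subsetneq\fg_{-1}$, and then $\ft\subset(\ft_{-1},\ft_0)_*^\fg$, which by the explicit formula \eqref{(**)} applied with $k'=\dim\ft_{-1}>k$ has degree-$0$ component strictly larger than $\fs_0$ and, crucially, not solvable --- because passing from $k$ to $k'>k$ the middle summand $\bigl(\bigoplus_{i>0}\Lambda^i(\xi_{k'+1},\dots,\xi_n)\bigr)\otimes\fvect(\xi_1,\dots,\xi_{k'})$ in degree $0$ contributes $\fvect_0(\xi_1,\dots,\xi_{k'})=\fgl(k')$, which is not solvable for $k'\ge 2$. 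Hence no such $\ft$ exists and $\fb$ is maximal.

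\textbf{Main obstacle.} The delicate point is the bookkeeping in the last step: showing rigorously that \emph{every} graded solvable subalgebra containing $\fb$ already has its $(-1)$st component equal to $\fs_{-1}$, i.e. that one genuinely cannot enlarge $\ft_{-1}$ without destroying solvability. This requires carefully identifying, inside the prolongation $(\ft_{-1},\ft_0)_*^\fg$ for $\dim\ft_{-1}>k$, an explicit non-solvable (indeed, $\fsl$ or $\fgl$) subalgebra of its degree-$0$ part, and checking this subalgebra already lies in $\ft_0$ because $\ft_0\supset\fs_0$ must preserve $\ft_{-1}$ and the prolongation relations $[\ft_{-1},\ft_1]\subset\ft_0$ force enough root vectors in. The formulas \eqref{(**)} make this transparent, so I expect the argument to go through by the same $\fg_1$-testing device as in Proposition~\ref{P1}, but the indexing --- especially keeping the $\fsvect$ and $\widetilde\fsvect$ cases aligned with the $\fvect$ case via intersection and twisting --- is where the care is needed.
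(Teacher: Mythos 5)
Your overall skeleton matches the paper's: solvability and maximality-with-fixed-negative-part come from Lemmas~\ref{L1} and~\ref{L3}, and the real content is to exhibit an element of $(\fs_{-1},\fs_0)_{*,1}$ that cannot survive in any graded solvable $\ft$ with $\ft_{-1}\supsetneq\fs_{-1}$. The paper does this with a single witness $v=\xi_1\xi_{k+1}\del_2$: it lies in $\fs_1$ because $[\del_j,v]\in\fs_0$ for all $j\le k$ (the only nonzero bracket is $\xi_{k+1}\del_2$, which is lower-triangular), while $[\del_{k+1},v]=\pm\xi_1\del_2$ is upper-triangular and, together with $\xi_2\del_1\in\fs_0$, generates $\fsl(2)$, so no solvable $\ft_0\supset\fs_0$ can absorb it. Your first computational paragraph contains the germ of exactly this (the bracket $[\del_\ell,\xi_\ell\xi_m\del_b]=\xi_m\del_b$ with $m<b\le k$), but you never pin it down to one verified witness; instead you appeal to ``iterating and combining \dots\ produces a copy of $\fvect$ on $\ge 2$ variables,'' which is not an argument.

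Two concrete defects. First, the ``cleaner way'' is logically broken: by definition the degree-$0$ component of the prolong $(\ft_{-1},\ft_0)_*^\fg$ is $\ft_0$ itself, which is solvable by hypothesis, so your claim that this prolong ``has degree-$0$ component \dots\ not solvable'' confuses it with the full prolong $(\ft_{-1})_*$; and even for the latter, containment of $\ft$ in a non-solvable algebra yields no contradiction. Likewise, with the lower-triangular numbering fixed in Section~\ref{Defs}, the degree-$0$ part of $\Lambda(\xi_1,\dots,\xi_k)\otimes\fm\fs0^{\fvect(\xi_{k+1},\dots,\xi_n)}$ \emph{is} contained in the lower-triangular $\fs_0'$, so your stated reason why $\fb\not\subset\fm\fs\fc^\fg$ is wrong (the correct obstruction is again the degree-$1$ element $v$, which violates the condition $i_a\ge j$ defining $\fm\fs\fc^{\fvect(0|n)}$). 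Second, ``the case $\fsvect(0|n)$ follows by intersecting with $\fsvect$'' fails precisely when $\dim\fs_{-1}=1$: there the witness becomes $\xi_1\xi_2\del_2$, which has $\Div(\xi_1\xi_2\del_2)=-\xi_1\ne 0$ and so is killed by the intersection; the paper must (and does) replace it by the divergence-free $\xi_1\xi_2\del_2-\xi_1\xi_3\del_3$. You would need to notice and handle this exceptional subcase for the proof to be complete.
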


\begin{proof}
By Lemmas~\ref{L1} and \ref{L3}, the Lie superalgebra $\fs:=(\fs_{-1}, \fs_0)_*^\fg$ is maximal among all solvable subalgebras in $\fg$ with the given negative component. Hence, the only thing we should check now is that $\fs$ is not contained in any subalgebra $\ft$ of the same type, i.e., maximal among all solvable subalgebras with a~fixed but larger negative component.
Indeed, if $\ft_{-1}=\langle \del_1,\dots,\del_k; \del_{k+1},\dots,\del_{l}\rangle $, then $v:=\xi_1\xi_{k+1}\del_2$ belongs to $\fs$, but not to $\ft$, unless $\fs=\fsvect(0|n)$ and $\dim\fs_{-1}=1$. In the latter case, ${v:=\xi_1\xi_{2}\del_2-\xi_1\xi_{3}\del_3}$ belongs to $\fs$, but not to~$\ft$.
\end{proof}

Let us elucidate formulas \eqref{(**)}. Let $V\subset \fg_{-1}$.
By Lemma \ref{L3}, if $\fs_{-1}=V$, then $\fs\subset V_*^\fg$.
Clearly, the subalgebra $V_*^\fg$ consists of all the vector fields in $\fg$ that preserve $V$, whereas $\fs$ is a~maximal solvable subalgebra in $V_*^\fg$. In the case where $\fg=\fvect(0|n)$, the form of elements of~$V_*^\fg$ is absolutely clear. Let ${V:=\Span(\partial_1, \dots, \partial_k)}$, and $\fvect_{\geq 0}:=\bigoplus_{i\ge 0} \fvect_i$. Then, we have the direct sum of subspaces (compare with formulas~\eqref{(**)})
\[
V_*^{\fvect(0|n)}= \Lambda(\xi_{k+1},\dots, \xi_n)\otimes \fvect(\xi_1,\dots, \xi_k)\oplus \Lambda(\xi_1,\dots,\xi_k)\otimes \fvect_{\geq 0}(\xi_{k+1},\dots, \xi_n).
\]

{\bf (iii) $\boldsymbol{0\neq V=\fs_{-1}\subsetneq\fg_{-1}}$ where $\boldsymbol{\fg=\fh(0|n)}$ for $\boldsymbol{n>4}$.} As we have observed in Lem\-ma~\ref{L3}, in this case, the subalgebra $\fs$ is contained in the maximal subalgebra $V_*\subset \fg$ with the given negative component $V$ and $\fs$ is a maximal solvable subalgebra in $V_*$. In particular, $\fs_0$ is a maximal solvable subalgebra in the stabiliser $\text{St}(V)\subset \fg_0$ of $V$.

Let $B$ be the non-degenerate symmetric and $\fg_0=\fo(n)$-invariant bilinear form on $\fg_{-1}$ corresponding to the symplectic form $\omega$. Since $\text{St}(V)$, and therefore its subalgebra $\fs_0$, preserve~$V$, they also preserve the subspaces $V^\perp$, $V\cap V^\perp= {\rm{Ker}} B|_V$ and $V+V^\perp$. This means that, in general, the subalgebra $\fs_0$, being a maximal solvable subalgebra in $\text{St}(V)$, might be not maximal solvable subalgebra in the whole component~$\fg_0$.

Introduce the following basis in $\fg_{-1}$:
\begin{gather}
\text{$\xi=(\xi_1, \dots , \xi_k)$ a basis in $V\cap V^\perp={\rm{Ker}} B|_V$,}\nonumber\\
\text{$\eta=(\eta_1, \dots, \eta_k)$ the dual to $\xi$ basis in the complement to $V+V^\perp$ in $\fg_{-1}$,}\nonumber\\
\text{$\alpha=\bigl(\xi_1^a, \dots, \xi_l^a, \eta^a_1,\dots, \eta_l^a, \zeta^a\bigr)$ a basis in the complement to $V\cap V^\perp$ in $V$,}\nonumber\\
\text{$\beta=\bigl(\xi_1^b, \dots, \xi_m^b, \eta^b_1,\dots, \eta_m^b, \zeta^b\bigr)$ a basis in the complement to $V\cap V^\perp$ in $V^\perp$.}\label{klm}
\end{gather}
We will order the basis elements so: $\xi$, $\eta$, followed by $\alpha$, and then $\beta$. Observe that all solvable subalgebras of $\fg_0$ are contained in the maximal solvable subalgebra from Table~\ref{tab3}.

In certain cases, not all these basis elements are present. For example, if $V\subset V^\perp$, i.e., the restriction of the form $B$ to the subspace $V$ is zero, there are no elements $\alpha$. The other way round, if $V^\perp\subset V$, then there are no elements $\beta$. Finally, if $V^\perp=V$, then the subspace $V$ is Lagrangian and only elements $\xi, \eta$ remain. If $V\cap V^\perp=0$, i.e., the restriction of $B$ to $V$ is non-degenerate, then there are no elements $\xi$, $\eta$.
Besides, if the codimension of $V\cap V^\perp$ in $V$ (resp.\ in~$V^\perp$) is even, then the element $\zeta^a$ (resp.\ $\zeta^b$) is absent. Nevertheless, in order not to overburden the text by considering various cases we will use all the basis elements assuming, when needed, that some of them are absent, i.e., are equal to~0.

We say that the subspace $V\subset \fg_{-1}$ is \textit{singular} if $\dim \Ker B|_V\leq 1$ and $\codim \Ker B|_V=1$ in~$V^\perp$.
In coordinates, this means that of all basis elements of type $\beta$ there is only one of $\zeta^b$ and either no elements of type $\xi$, $\eta$ at all, or there is just one of type $\xi$ and one of type $\eta$.

\begin{Proposition}\label{ThH} Let $\fg=\fh(0|n)$ for $n>4$. If the subspace $V\subset \fg_{-1}$ is non-singular, then the subalgebra $\fm\fs V^{\fh(0|n)}$ is maximal solvable in $\fh(0|n)$.
\end{Proposition}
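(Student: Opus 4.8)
The plan is to follow the pattern of the proof of Proposition~\ref{P3}, adapting the combinatorial core to $\fh(0|n)$. By Lemmas~\ref{L1} and~\ref{L3}, the Lie superalgebra $\fs:=\fm\fs V^{\fh(0|n)}=(V,\fs_0)_*$ is solvable and is maximal among all graded solvable subalgebras of $\fg$ whose $(-1)$st component equals $V$; equivalently, $\fs$ is a maximal solvable subalgebra of $V_*^{\fh(0|n)}$. Hence it remains only to rule out that $\fs$ is contained in a graded solvable $\ft\supseteq\fs$ with strictly larger negative component $W:=\ft_{-1}\supsetneq V$ (the extreme case $W=\fg_{-1}$, i.e.\ $\ft\subseteq\fm\fs\fc^{\fg}$, being included here). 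Applying Lemmas~\ref{L1} and~\ref{L3} once more --- enlarge $\ft_0$ to a maximal solvable subalgebra $\hat\ft_0$ of $\St(W)$ and replace $\ft$ by $(W,\hat\ft_0)_*$ (which still contains $\fs$ and is graded solvable by Lemma~\ref{L1}) --- we may assume $\ft=(W,\ft_0)_*$ with $\ft_0$ maximal solvable in $\St(W)$ and $\fs_0\subseteq\ft_0$; in particular $\fs_0$ preserves $W$, so $\fs_0\subseteq\St(V)\cap\St(W)$, which already forces $W$ into a very special position relative to the basis~\eqref{klm}.

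The contradiction will come from degree~$1$. I will exhibit a cubic generating function $g$ with $\{g,V\}\subseteq\fs_0$ --- so that $g\in\fs_1\subseteq\fs\subseteq\ft$, hence $g\in\ft_1$ --- but $\{g,w\}\notin\ft_0$ for some $w\in W\setminus V$; since $\ft=(W,\ft_0)_*$, the latter means $g\notin\ft_1$, a contradiction. Certifying $\{g,w\}\notin\ft_0$ reduces to checking either that the quadratic $\{g,w\}$ does not preserve $W$ (hence lies outside $\St(W)\supseteq\ft_0$), or that $\{g,w\}$, being a root vector of $\fg_0=\fo(n)$, pairs with an opposite root vector already contained in $\fs_0$ to generate an $\fsl(2)$, contradicting solvability of $\ft_0$ (equivalently of $\ft$, by Lemma~\ref{L1}). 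Throughout I use that, by Table~\ref{tab3} and the ordering $\xi,\eta,\alpha,\beta$ of~\eqref{klm}, $\fs_0$ is the explicit "triangular" maximal solvable subalgebra of $\St(V)$, so that $\{g,V\}\subseteq\fs_0$ becomes a transparent monomial condition, parallel to the explicit bases of $\fm\fs\fc^{\fh(0|2k)}$ and $\fm\fs\fc^{\fh(0|2k+1)}$ recorded before formulas~\eqref{(*)}.

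The construction of $g$ is organized by the position of $w\in W\setminus V$. Since $V=\langle\xi\rangle\oplus\langle\alpha\rangle$, any such $w$ has a nonzero component along $\langle\eta\rangle$ or along $\langle\beta\rangle$. If $w$ involves some $\eta_i$, one takes $g$ of the form $\xi_i\xi_j\eta_j$ or $\xi_i\xi_j\zeta^{b}$ (with $\xi_j$ a further element of $\Ker B|_V$, or $\xi_j$ replaced by an element of $\beta$); then $\{g,w\}$ is an $\fo(n)$-root vector attached to the pair $(\eta_i,\cdot)$ whose opposite already lies in $\fs_0$, or which fails to preserve $W$, and the contradiction follows. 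If $w$ involves a direction of type $\beta$, one builds $g$ from two $B$-orthogonal directions of $V^\perp\setminus\Ker B|_V$, or from a mixed triple (one $\xi$, one $\eta$, one $\beta$), and argues in the same way; for $\fh(0|2k+1)$ the $\zeta$-terms are absorbed through the isomorphism~\eqref{(*)} and its Heisenberg factor $\fhei(\zeta)$, adding only finitely many bookkeeping cases. This is precisely where the hypothesis enters: $V$ is non-singular exactly when $\dim\Ker B|_V\ge 2$ \emph{or} $\codim\bigl(\Ker B|_V\ \text{in}\ V^\perp\bigr)\ge 2$, i.e.\ when there are at least two independent $\xi$'s (and two dual $\eta$'s), \emph{or} at least two $B$-orthogonal $\beta$-directions --- exactly the room needed to choose a cubic $g$ that centralizes all of $V$ yet detects the extra direction $w$. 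In the borderline non-singular configurations one may have to bracket $\{g,w\}$ once more with a second element of $W$, whose availability is again guaranteed by non-singularity; when $V$ is singular every such candidate collapses, consistently with $\fm\fs V^{\fh(0|n)}$ then failing to be maximal (a case falling outside the present statement).

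The main obstacle is this last step: verifying, uniformly over all non-singular $V$ and all $w\in W\setminus V$, that the constructed cubic $g$ really lies in $\fs$ --- i.e.\ $\{g,V\}\subseteq\fs_0$ with $\fs_0$ the full triangular subalgebra of $\St(V)$ from Table~\ref{tab3} --- while $\{g,w\}$ escapes $\ft_0$, and making transparent why this breaks down exactly in the singular configurations, so that the hypothesis is sharp. A secondary, essentially mechanical, difficulty is running the case split in the ordered basis~\eqref{klm}, keeping track of the presence or absence of each basis type and of the odd-variable $\zeta$-cases through formulas~\eqref{(*)}.
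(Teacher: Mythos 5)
Your overall strategy coincides with the paper's: by Lemmas~\ref{L1} and~\ref{L3}, $\fs=\fm\fs V^{\fh(0|n)}$ is maximal among graded solvable subalgebras with $(-1)$st component $V$, so one only has to rule out containment in some $\ft=(\ft_{-1},\ft_0)_*$ with $\ft_{-1}\supsetneq V$ and $\ft_0\supseteq\fs_0$ solvable, and this is done by producing a cubic $u\in\fs_1$ whose bracket with a suitable $w\in\ft_{-1}\setminus V$ cannot lie in $\ft_0$ --- the obstruction being detected either by failure to preserve $\ft_{-1}$ or by the presence in $\fs_0$ of the opposite root vector, so that $\ft_0$ would contain an $\fsl(2)$. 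All of this matches the paper.

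The gap is that the step you yourself flag as ``the main obstacle'' is the entire content of the proof, and your sketch of it would not survive being carried out. First, your reformulation of non-singularity is wrong: the negation of ``$\dim\Ker B|_V\le 1$ and $\codim_{V^\perp}\Ker B|_V=1$'' also includes the case $\codim_{V^\perp}\Ker B|_V=0$, i.e., $V^\perp\subseteq V$. In that (non-singular) configuration there are no $\beta$-directions at all and possibly only one $\xi$ (e.g., $V$ of codimension $1$ containing the line $V^\perp$), so there are neither ``two independent $\xi$'s'' nor ``two $B$-orthogonal $\beta$-directions''; the paper handles it with witnesses built from $\alpha$-elements ($u=\xi_k\eta_k\eta_1^a$ or $\xi_k\eta_k\zeta^a$, and $\xi_k\xi_2\eta_1$ in the Lagrangian case), which your recipe does not produce --- indeed your characterization would misclassify such $V$ as singular and outside the statement. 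Second, you do not exploit the paper's key organizing observation: since $\ft_{-1}$ is $\fs_0$-invariant and $\fs_0$ is the fixed triangular subalgebra attached to the ordered basis~\eqref{klm}, $\ft_{-1}$ is forced to contain one specific extremal vector --- $\xi_1^b$ if $m\ne 0$, $\zeta^b$ if $m=0$ but $V^\perp\not\subseteq V$, and $\eta_k$ if $V^\perp\subseteq V$ --- which is what collapses ``all $w\in\ft_{-1}\setminus V$'' to exactly three cases with explicit witnesses ($\xi_1^b\eta_1^b\eta_1$, $\xi_2\eta_1\zeta^b$, etc.). Without that reduction, and with the above miscounting of the non-singular configurations, the uniform verification you defer cannot be completed as described. (A smaller slip: in the paper's subcase $k=0$, $l\ge1$ the witness $\xi_1^b\eta_1^b\eta_1^a$ uses a dual pair of $\beta$-directions, not a $B$-orthogonal one.)
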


\begin{proof} First, let us describe the Lie superalgebra $V_*$. First of all, note that among the elements of the subspace $V$ acting on functions generating $\fh(0|n)$, there are derivations with respect to all indeterminates of types $\eta$ and $\alpha$, but there are no derivations with respect to indeterminates of types $\xi$ and $\beta$. Therefore, the Lie superalgebra $V_*$ contains a subspace $W$:
\[
W=\Span(f\in \fh(0|n)\mid f   \ \text{is a monomial and} \ \deg_{\xi,\beta}f\ge 2).
\]
Moreover, the subspace $W$ is an ideal in $V_*$ and is contained in $(V_*)_{\ge 0}$. The ideal $W$ is the direct sum of the three subspaces $W=W^{2,0}\oplus W^{1,1}\oplus W^{0,2}$, where
\begin{gather*}
W^{2,0}=\Span(f\in W\mid f\   \text{is a monomial and}\   \deg_\xi f\ge 2,\, \deg_\beta f=0),\\
W^{1,1}=\Span(f\in W\mid f \   \text{is a monomial and}\  \deg_\xi f\ge 1, \,\deg_\beta f\ge 1),\\
W^{0,2}=\Span(f\in W\mid f\   \text{is a monomial and}\   \deg_\xi f=0,\, \deg_\beta f\ge 2).
\end{gather*}

Observe that $W^{2, 0}$, $W^{0,2}$ and $W^{2,0}+W^{1,1}$ are subalgebras, but not ideals, in $W$.

The subalgebra $W^{0,2}$ is isomorphic to the tensor product $\fh(\beta)_{\ge 0}\otimes \fpo(\alpha) \otimes \Lambda(\eta)$, understood as the product of Poisson superalgebras, see \cite{Z}, i.e., on each of the factors there are two operations: a supercommutative and associative multiplication and the Poisson bracket (the zero one on the third factor) and these two multiplications are related by means of the Leibniz rule, so the Poisson bracket on the product is given by the formula{\samepage
\begin{gather*}
[f_1(\beta)\otimes g_1(\alpha)\otimes h_1(\eta),   f_2(\beta)\otimes g_2(\alpha)\otimes h_2(\eta)]\\
=
\pm [f_1(\beta),f_2(\beta)]\otimes g_1(\alpha)g_2(\alpha)\otimes h_1(\eta)h_2(\eta) \pm f_1(\beta)f_2(\beta)\otimes[g_1(\alpha), g_2(\alpha)]\otimes h_1(\eta)h_2(\eta),
\end{gather*}
where the signs $\pm$ are governed by the sign rule.}

Therefore, the solvable subalgebra $\fm\fs V^{\fh(0|n)}$ contains the product $\fm\fs 0^{\fh(\beta)}\otimes \fpo(\alpha)\otimes \Lambda(\eta)$ coming from the subalgebra $W^{0,2}$.

The complement to the ideal $W$ in $V_*$ is the direct sum of two subspaces $W^{1,0}\oplus W^{0,0}$, where
\begin{gather*}
 W^{1,0}=\Span(f\in W\mid f   \ \text{is a monomial and}\ \deg_\xi f\ge 1,\, \deg_\beta f=0) \\
 \hphantom{W^{1,0}}{} =\Span(\psi(\alpha)\sum_i \xi_i\varphi_i(\eta)),\\
 W^{0,0}=\Span(f\in W\mid f \   \text{is a monomial and}  \ \deg_\xi f=0
= \deg_\beta f)
=\Span(f(\alpha))\simeq \fh(\alpha).
\end{gather*}

Observe that $\Span (\sum_i \xi_i\varphi_i(\eta))\simeq \fvect(\eta)$. The subspace $W^{1,0}$ is not a subalgebra. A subalgebra containing $W^{1,0}$ is a semi-direct sum $W^{1,0}\niplus W^{2,0}$, and
$(W^{1,0}\niplus W^{2,0})/W^{2,0} \simeq \fvect(\eta)\otimes \Lambda(\alpha)$. Therefore, we can represent $W^{1,0}\niplus W^{2,0}$ as $\fvect(\eta)\otimes \fpo(\alpha)\niplus W^{2,0}$, so its solvable part is
\[
\fm\fs\fc^{\fvect(\eta)}\otimes \fpo(\alpha)\niplus W^{2,0}.
\]
Finally, each of the subspaces $W^{2,0}$, $W^{1,1}$, $W^{0,2}$ and $W^{1,0}\niplus W^{2,0}$ is invariant under
$W^{0,0}\simeq\fh(\alpha)$.
Thus,
\[
V_*=\fh(\alpha)\inplus(\fvect(\eta)\otimes \fpo(\alpha)+W^{2,0}+W^{1,1}+\fh(\beta)_{\ge 0}\otimes \fpo(\alpha) \otimes \Lambda(\eta)),
\]
and the maximal solvable subalgebra $\fs=\fm\fs V^{\fh(0|n)}$ with the negative component $\fs_{-1}=V$ is of the form
\begin{gather}
\fs=\fm\fs V^{\fh(0|n)}=\fm\fs\fc^{\fh(\alpha)}\inplus (\fm\fs\fc^{\fvect(\eta)}\otimes \fpo(\alpha)+W^{2,0}+W^{1,1}\nonumber\\
\hphantom{\fs=}{}+\fm\fs 0^{\fh(\beta)}\otimes \fpo(\alpha) \otimes \Lambda(\eta)).\label{msV}
\end{gather}

This $\fs$ is the maximal solvable subalgebra among all solvable subalgebras with a given negative part. To see that $\fs$ is maximal among all solvable subalgebras, it suffices to verify that if $\ft=(\ft_{-1},\ft_0)_*$ and $\ft_{-1}\supset\fs_{-1}$ whereas $\ft_{0}\supset\fs_{0}$ is solvable, then $\fs$ is not contained in $\ft$.

First, note that the subspace $\ft_{-1}$ is necessarily invariant with respect to $\ft_0$, and hence with respect to $\fs_0$. Therefore, we are left with the three cases:
\begin{enumerate}\itemsep=0pt
\item[(i)] if $m\ne 0$, then the space $\ft_{-1}$ must contain $\xi^b_1$;
\item[(ii)] if $m=0$, but $V^\perp$ is not contained in $V$, then the space $\ft_{-1}$ must contain $\zeta^b$;
\item[(iii)] if $V^\perp\subset V$, then the space $\ft_{-1}$ must contain $\eta_k$.
\end{enumerate}

In each of these cases, we indicate an element $u\in \fs_1$ which is contained in the subalgebra $\fs$, but can not be contained in $\ft$. In the majority of these cases, $u\in W$. Each case contains also subcases occasioned by a low dimension of the subspaces involved; some of these subcases are exceptional because the subspace $V$ is singular.

In case (i) for $k\ge 1$, we can take $u=\xi_1^b\eta_1^b \eta_1\in W^{0,2}$. Then, $[\xi_1^b, u]=-\xi_1^b \eta_1\notin \ft_0$, because the basis elements of type $\beta$ go after $\xi$, $\eta$. If $k=0$, i.e., the restriction of the form $B$ to $V$ is non-degenerate, we can take (note that for $l=0$ we have $m\ge 2$, see \eqref{klm}, since $n>4$){\samepage
\[
u=\begin{cases}\xi_1^b\eta_1^b \eta_1^a\in W^{0,2}&\text{if $l\ge 1$},\\
\eta_1^b\eta_2^b\zeta_1^a\in W^{0,2}&\text{if $l=0$,}
\end{cases}\quad \Longmapsto\quad [\xi_1^b, u]=\begin{cases}-\xi_1^b \eta_1^a\notin \ft_0,\\
-\eta_2^b \zeta_1^a\notin \ft_0,\end{cases}
\]
because the elements of type $\beta$ follow the elements of type $\alpha$.}

In case (ii), of all elements of type $\beta$ we have just one $\zeta^b$. For $k\ge 2$, we
 take $u=\xi_2\eta_1\zeta^b\in W^{1,1}$. Then,
$[\zeta^b, u]=\xi_2\eta_1\notin \ft_0$.

The subcases $k=1$ and $k=0$ are exceptional because the subspace $V$ is singular.

For $k=1$, the dimensions of the spaces spanned by each of the elements of type $\xi$ and $\beta$ are equal to 1. Hence, $W^{2,0}=W^{0,2}=0$ and $W=W^{1,1}$. The subspace $W^{1,0}$ is a subalgebra, the sum $\fn= W^{1,0}+W^{1,1}$ is a solvable ideal in the subalgebra $V_*$; this ideal consists of functions ``divisible by~$\xi_1$''. Accordingly, the maximal solvable subalgebra contained in $V_*$ is 
$\fm\fs V^{\fh(0|n)} =\fm\fs\fc^{\fh(\alpha)}\niplus\fn$. Note that this subalgebra is invariant under bracketing with $\zeta^b$, so $\fm\fs V^{\fh(0|n)} \subset\fm\fs \widetilde V^{\fh(0|n)}$, where $\widetilde V=V\oplus \langle\zeta^b\rangle$.

For $k=0$, there are no elements $\xi$, $\eta$, the codimension of $V$ in $\fg_{-1}$ is equal to 1, and the restriction of the form $B$ to $V$ is non-degenerate. Therefore, the ideal in formula \eqref{msV} vanishes, and hence $\fm\fs V^{\fh(0|n)}= \fm\fs\fc^{\fh(\alpha)}$. Since $\fh(\alpha)$ commutes with $\zeta^b$, we see that in this case, $\fm\fs V^{\fh(0|n)}\subset \fm\fs\fc^{\fh(0|n)}$.

In case (iii), there are no elements of type $\beta$ and, if the elements of type $\alpha$ are present, i.e., when $V^\perp \ne V$, we can take
\[
u=\begin{cases}\xi_k\eta_k\eta_1^a&\text{for $l\ge 1$},\\
\xi_k\eta_k\zeta^a&\text{for $l=0$,}\end{cases}\quad \Longmapsto\quad  [\eta_k,u]=\begin{cases}\eta_k\eta_1^a\notin \ft_0,\\
\eta_k\zeta^a\notin \ft_0.\end{cases}
\]

If $V^\perp = V$, there remain only elements $\xi$, $\eta$; moreover, $k\ge 3$ because $n>4$. In this case, we can take $u=\xi_k\xi_2\eta_1$. Then, $[\eta_k,u]=\xi_2\eta_1\notin \ft_0$. \end{proof}

\section[The two remaining cases of small dimension: vect(0|2) and h'(0|4)]{The two remaining cases of small dimension:\\ $\boldsymbol{\fvect(0|2)}$ and $\boldsymbol{\fh'(0|4)}$}\label{ss5}

For completeness, let us consider also the cases of $\fvect(0|2)\simeq \fsl(1|2)$ and $\fh'(0|4)\simeq \fp\fsl(2|2)$ whose maximal solvable subalgebras were described in their supermatrix realisation in~\cite{S2}.

\begin{Proposition}\label{P5} The maximal graded solvable subsuperalgebras in $\fg=\fvect(0|2)$ are listed in Table~{\rm \ref{vect02}}.
\begin{table}[h]\centering \renewcommand{\arraystretch}{1.2}
\begin{tabular}{|c|c|c|c|}
\hline
name of $\fs$ & basis of $\fs_{-1}$ & basis of $\fs_0$ & basis of $\fs_1$\\
\hline
$\fm\fs V$ & $\partial_1 $ & $\xi_1\partial_1$, $\xi_2\partial_1$, $\xi_2\partial_2$ & $\xi_1\xi_2\partial_1$, $\xi_1\xi_2\partial_2$ \\
\hline
$\fm\fs\fc$ & $\partial_1$, $\partial_2 $ & $\xi_1\partial_1$, $\xi_2\partial_1$, $\xi_2\partial_2$ & $\xi_1\xi_2\partial_1$ \\
\hline
\end{tabular}
\caption{}\label{vect02}
\end{table}

The maximal graded solvable subsuperalgebras in $\fg=\fh'(0|4)$ are listed in Table~{\rm \ref{h04}}.
\begin{table}[h]\centering \renewcommand{\arraystretch}{1.2}
\begin{tabular}{|c|c|c|c|}
\hline
name of $\fs$ & basis of $\fs_{-1}$ & basis of $\fs_0$ & basis of $\fs_1$\\
\hline
$\fm\fs V$ & $\xi_1 $ & $\xi_1\eta_1$, $\xi_1\xi_2$, $\xi_1\eta_2$, $\xi_2\eta_2$ & $\xi_1\eta_1\eta_2$, $\xi_1\xi_2\eta_1$, $\xi_1\xi_2\eta_2$, $\xi_2\eta_1\eta_2$ \\
\hline
$\fm\fs\fc$ & $\xi_1$, $\xi_2,\eta_1,\eta_2 $ & $\xi_1\eta_1$, $\xi_1\xi_2$, $\xi_1\eta_2$, $\xi_2\eta_2$ & $\xi_1\xi_2\eta_2$ \\
\hline
$\fm\fs\widetilde V$ & $\xi_1$, $\xi_2,\eta_2 $ & $\xi_1\eta_1$, $\xi_1\xi_2$, $\xi_1\eta_2$, $\xi_2\eta_2$ & $\xi_1\eta_1\eta_2$, $\xi_1\xi_2\eta_1$, $\xi_1\xi_2\eta_2$ \\
\hline
\end{tabular}
\caption{}\label{h04}
\end{table}
\end{Proposition}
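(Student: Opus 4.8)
The plan is to follow the trichotomy of Section~\ref{3cases} --- $\fs_{-1}=0$, $\fs_{-1}=\fg_{-1}$, and $0\neq\fs_{-1}\subsetneq\fg_{-1}$ --- but, since $\fvect(0|2)$ and $\fh'(0|4)$ carry only the three graded components $\fg_{-1},\fg_0,\fg_1$ (so $\fg_i=0$ for $|i|>1$), the dimension hypotheses of Propositions~\ref{P1}--\ref{ThH} are not met and each case must be checked directly; the computations are short. In every case the candidate subalgebra will, by construction, have a solvable degree-$0$ part, hence will be solvable by Lemma~\ref{L1}; and, by Lemmas~\ref{L1} and~\ref{L3}, it is already maximal among graded solvable subalgebras \emph{with its own negative component}. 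So the only work left is to decide whether it sits inside a graded solvable subalgebra $\ft$ with strictly larger negative part. This I check by the device used in Propositions~\ref{P1}, \ref{P3} and~\ref{ThH}: exhibit one $u\in\fs_1$ for which $[u,\ft_{-1}]$ would have to contain a vector field outside the chosen Borel $\fs_0\subset\fg_0$, a contradiction.

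For $\fg=\fvect(0|2)$, where $\fg_0=\fgl(2)$ and $\fg_1=\langle\xi_1\xi_2\del_1,\xi_1\xi_2\del_2\rangle$: in case~(i), with $\fs_0$ as in Table~\ref{tab3}, one checks $[\del_1,\fg_1]\subset\fs_0$ and $[\del_1,\fs_0]\subset\fs_0\oplus\langle\del_1\rangle$, so $\fm\fs0\oplus\langle\del_1\rangle$ is a larger solvable subalgebra --- in fact it equals the case-(iii) algebra $\fm\fs V$ --- and case~(i) produces nothing new. In case~(ii), $\fm\fs\fc=(\fg_{-1},\fs_0)_*$ prolongs to $\fm\fs\fc_1=\langle\xi_1\xi_2\del_1\rangle$ as in Table~\ref{vect02}; since $\fs_{-1}$ is already all of $\fg_{-1}$ and $\fs_0$ is a Borel of $\fgl(2)$, maximality is immediate. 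In case~(iii) all lines in $\fg_{-1}$ are $\fgl(2)$-conjugate, so take $V=\langle\del_1\rangle$; the maximal solvable subalgebra of $\fg_0$ preserving $V$ is again a Borel, $\fm\fs V$ prolongs to $\fm\fs V_1=\fg_1$ as in Table~\ref{vect02}, and $\fm\fs V$ is not contained in $\fm\fs\fc$ because $\xi_1\xi_2\del_2\in\fm\fs V_1\setminus\fm\fs\fc_1$. As $\dim\fs_{-1}$ is $1$ for $\fm\fs V$ and $2$ for $\fm\fs\fc$, the two are non-conjugate and the list is complete.

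For $\fg=\fh'(0|4)\simeq\fpsl(2|2)$ I work with generating functions, so that $\fg_{-1}$, $\fg_0\simeq\fo(4)$, $\fg_1$ correspond to functions of degrees $1,2,3$, the degree-$4$ piece $\langle\Xi\rangle$ being absent. Using the $\fo(4)$-invariant form $B$ on $\fg_{-1}$, a proper nonzero $V\subset\fg_{-1}$ is, up to the $\fo(4)$-action and rescaling of $B$, one of finitely many types (isotropic or anisotropic line; totally isotropic, rank-$1$, or non-degenerate plane; hyperplane with isotropic or with anisotropic perp); for each put $\fm\fs V=(V,\fs_0)_*$ with $\fs_0$ a maximal solvable subalgebra of $\St(V)$. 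Case~(i) again gives nothing: $\fm\fs0\oplus\langle\xi_1\rangle$ is a larger solvable subalgebra --- here $\fs_0=\St(\langle\xi_1\rangle)$ is solvable, preserves $\langle\xi_1\rangle$, and $[\langle\xi_1\rangle,\fg_1]\subset\fs_0$ since $\{\{\xi_1,f\},\xi_1\}=\pm\del_{\eta_1}^{2}f=0$ --- and it equals $\fm\fs V$ for $V=\langle\xi_1\rangle$. Case~(ii) with $\fs_0$ as in Table~\ref{tab3} gives $\fm\fs\fc$, $\fm\fs\fc_1=\langle\xi_1\xi_2\eta_2\rangle$, as in Table~\ref{h04}. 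In case~(iii), arguing as in Proposition~\ref{ThH}, $\fm\fs V$ is maximal \emph{unless} $V$ is degenerate, when it is properly contained in $\fm\fs\widetilde V$ for a suitable $\widetilde V\supset V$ (or in $\fm\fs\fc$, the case $\widetilde V=\fg_{-1}$): e.g.\ for the singular plane $V=\langle\xi_1,\xi_2+\eta_2\rangle$ one has $\widetilde V=V\oplus\langle\xi_2-\eta_2\rangle=\langle\xi_1,\xi_2,\eta_2\rangle$, and a case-by-case check shows that the anisotropic line, the totally isotropic plane, the non-degenerate plane, and the non-degenerate hyperplane each give $\fm\fs V$ sitting inside $\fm\fs\fc$, inside $\fm\fs V$ for $V=\langle\xi_1\rangle$, or inside $\fm\fs\widetilde V$. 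The survivors are $V=\langle\xi_1\rangle$ and $\widetilde V=\langle\xi_1,\xi_2,\eta_2\rangle$; their prolongations reproduce the rows $\fm\fs V$ and $\fm\fs\widetilde V$ of Table~\ref{h04}, each maximal by the obstruction-element argument (enlargement of $\fs_{-1}$ being blocked by $u=\xi_2\eta_1\eta_2$ for $\fm\fs V$ and by $u=\xi_1\eta_1\eta_2$ for $\fm\fs\widetilde V$). Since $\dim\fs_{-1}$ equals $1,4,3$ for $\fm\fs V,\fm\fs\fc,\fm\fs\widetilde V$, these are pairwise non-conjugate, and the list is complete.

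The step I expect to be the real work is the completeness half of the $\fh'(0|4)$ case. Because $\fpsl(2|2)$ is the exceptional vectorial superalgebra --- with the top form removed and with $\fg_0\simeq\fo(4)$ not simple and having many small invariant subspaces --- one must rule out spurious maximal types arising from low-dimensional coincidences: concretely, show that every $V$ whose stabiliser-Borel $\fs_0$ fails to be maximal solvable in $\fg_0$ (the anisotropic lines and all the planes) gives $\fm\fs V$ strictly inside one of the three tabulated subalgebras. This is a finite check, but a delicate one, because on enlarging the negative part the degree-$1$ component shrinks while the degree-$(-1)$ component grows, so the containment must be read off from explicit prolongations rather than from a dimension count.
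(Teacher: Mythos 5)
Your argument is correct, and for $\fvect(0|2)$ it is the paper's own argument: the observation that $\fm\fs0$ is invariant under $\del_1$ and hence absorbed into $\fm\fs V$, plus the explicit prolongation computations. For $\fh'(0|4)$ you take a genuinely different route. The paper identifies $\fh'(0|4)$ with $\fpsl(2|2)$ and imports the classification of its maximal solvable subalgebras from \cite{S2}, where they are labelled by the collections of superdimensions of the irreducible quotients of the tautological $2|2$-dimensional module: $\{(1|0),(1|1),(0|1)\}$ accounts for $\fm\fs V$ and $\fm\fs\fc$, $\{(1|1),(1|1)\}$ for $\fm\fs\widetilde V$, and $\{(2|2)\}$ yields a $1$-parameter family of \emph{non-graded} subalgebras which therefore drops out; completeness is thus inherited from \cite{S2}, and the graded constructions are only matched against that list. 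You instead stay entirely inside the vectorial realisation: you stratify the proper subspaces $V\subset\fg_{-1}$ into the seven $\fo(4)$-orbit types determined by the form $B$, compute each prolong $(V,\fs_0)_*$ explicitly, and show that five of the seven types land inside one of the three tabulated algebras --- which is precisely the content of the paper's Table~\ref{5sub} together with the singular subcases of the proof of Proposition~\ref{ThH}, redone at $n=4$ where that proposition's hypothesis fails. The paper's route buys brevity and the extra information about the excluded non-graded family; yours buys self-containedness and uniformity with Propositions~\ref{P1}--\ref{ThH}, at the price of the finite case-by-case check that you only assert (``a case-by-case check shows\dots''), namely that the anisotropic line, the isotropic and non-degenerate planes, the rank-one plane and the non-degenerate hyperplane each embed into a conjugate of one of the three survivors. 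That check is genuinely needed but routine in a $4$-dimensional space, your orbit enumeration is exhaustive, and your obstruction elements $u=\xi_2\eta_1\eta_2$ and $u=\xi_1\eta_1\eta_2$ for the two case-(iii) survivors do the job, so I see no gap.
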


\begin{proof} If $\fg=\fvect(0|2)$, then the subalgebra $\fm\fs 0^{\fvect(0|2)}$ is invariant under the action of $\partial_1$, and therefore is contained in $\fm\fs V$, where $V=\langle \partial_1\rangle$, and is not maximal. Thus, we get precisely two $\Zee$-graded maximal subalgebras, see Table~\ref{vect02}. Obviously, although these subalgebras are not isomorphic as $\Zee$-graded subalgebra, they are isomorphic as abstract subalgebras. This corresponds with the description of maximal graded solvable subsuperalgebras of $\fsl(1|2)$ obtained in~\cite{S2}.

Now, consider $\fg=\fh'(0|4)$. As stated in~\cite{S2}, both the maximal solvable subalgebra $\fs$ of $\fsl(2|2)$, and of its quotient $\fp\fsl(2|2)$ modulo center, are described by the collection of superdimensions of irreducible quotients under the tautological action of $\fs$ in the $2|2$-dimensional superspace. For the $2|2$-dimensional superspace, there are only three such collections: $\{(1|0), (1|1), (0|1)\}$,\, $\{(1|1), (1|1)\}$ and $\{(2|2)\}$. Note that the last collection corresponds to a 1-parameter family of subalgebras conjugate in $\fgl(2|2)$ (resp.\ in $\fp\fgl(2|2)$), but not in $\fsl(2|2)$ (resp.\ not in $\fp\fsl(2|2)$).\looseness=1

Now, let us look how the constructions, used in this note in other cases, work in the description of maximal graded solvable Lie superalgebras in $\fh'(0|4)$. As in the case of $\fvect(0|2)$, the subalgebra $\fm\fs 0$ is invariant under the action of $\xi_1$, and therefore is contained in $\fm\fs V$, where $V=\langle \xi_1\rangle$, and hence is not maximal. This subalgebra, as well as the isomorphic to it as abstract, but not as $\Zee$-graded, subalgebra $\fm\fs\fc$ corresponds to the collection of superdimensions $\{(1|0), (1|1), (0|1)\}$. To the subalgebra $\fm\fs\widetilde V$, where $\widetilde V=\Span(\xi_1,\xi^a,\eta^a)$, there corresponds the collection $\{(1|1), (1|1)\}$. As a result, we get three maximal $\Zee$-graded solvable subalgebras, two of which are isomorphic as abstract ones, see Table~\ref{h04}.
\begin{table}[h]\centering\renewcommand{\arraystretch}{1.2}
\begin{tabular}{|c|c|c|c|}
\hline
name of $\fs$ & basis of $\fs_{-1}$ & basis of $\fs_0$ & basis of $\fs_1$\\
\hline
$\fm\fs V$ & $\xi_1 $ & $\xi_1\eta_1, \xi_1\xi_2, \xi_1\eta_2, \xi_2\eta_2$ & $\xi_1\eta_1\eta_2, \xi_1\xi_2\eta_1, \xi_1\xi_2\eta_2, \xi_2\eta_1\eta_2$ \\
\hline
$\fm\fs\fc$ & $\xi_1, \xi_2,\eta_1,\eta_2 $ & $\xi_1\eta_1, \xi_1\xi_2, \xi_1\eta_2, \xi_2\eta_2$ & $\xi_1\xi_2\eta_2$ \\
\hline
$\fm\fs\widetilde V$ & $\xi_1, \xi^a, \eta^a$ & $\xi_1\eta_1, \xi_1\xi^a, \xi_1\eta^a, \xi^a\eta^a$ & $\xi_1\eta_1\eta^a, \xi_1\eta_1\xi^a, \xi_1\xi^a\eta^a$ \\
\hline
\end{tabular}
\caption{}\label{ill}
\end{table}

Note that the 1-parameter family of maximal solvable subalgebras in $\fsl(2|2)$ (and in $\fp\fsl(2|2)$) corresponding to the collection $\{(2|2)\}$ consists of \textit{non-graded} subalgebras, and therefore this family is absent in this note devoted to the classification of
\textit{graded} subalgebras.

Observe also that five subalgebras of the form $\fm\fs V$ are not maximal. If $\dim V=3$, and the restriction of the form $B$ to the subspace $V$ is non-degenerate, and also if $\dim V=2$ and $\dim \Ker B|_V=1$, then $V$ is singular. The other three cases are occasioned by very small dimension of all subspaces involved.
\begin{table}[h]\centering\renewcommand{\arraystretch}{1.2}
\begin{tabular}{|c|c|c|c|}
\hline
name of $\fs$ & basis of $\fs_{-1}$ & basis of $\fs_0$ & basis of $\fs_1$\\
\hline
$\fm\fs V$ & $\zeta $ & $\xi^b\eta^b$, $\xi^b\zeta^b$ & $\xi^b\eta^b\zeta^b$, $\xi^b\zeta^b\zeta$, $\xi^b\eta^b\zeta$ \\
\hline
$\fm\fs \widetilde V$ & $\zeta$, $\xi^b$, $\zeta^b $ & $\xi^b\eta^b$, $\xi^b\zeta^b$, $\xi^b\zeta$, $\zeta\zeta^b$ & $\xi^b\eta^b\zeta^b$, $\xi^b\zeta^b\zeta$, $\xi^b\eta^b\zeta$ \\
\hline\hline
$\fm\fs V$ & $\xi_1$, $\xi_2$ & $\xi_1\eta_1$, $\xi_1\xi_2$, $\xi_1\eta_2$, $\xi_2\eta_2,$ & $\xi_1\eta_1\eta_2$, $\xi_1\xi_2\eta_1$, $\xi_1\xi_2\eta_2$ \\
\hline
$\fm\fs\widetilde V$ & $\xi_1$, $\xi_2,\eta_2 $ & $\xi_1\eta_1$, $\xi_1\xi_2$, $\xi_1\eta_2$, $\xi_2\eta_2$ & $\xi_1\eta_1\eta_2$, $\xi_1\xi_2\eta_1$, $\xi_1\xi_2\eta_2$ \\
\hline\hline
$\fm\fs V$ & $\xi^a$, $\eta^a$ & $\xi^a\eta^a$, $\xi^b\eta^b$ & $\xi^b\eta^b\xi^a$, $\xi^b\eta^b\eta^a$ \\
\hline
$\fm\fs\widetilde V$ & $\xi^a$, $\eta^a,\xi^b$ & $\xi^a\eta^a$, $\xi^b\eta^b$, $\xi^a\xi^b$, $\xi^b\eta^a$ & $\xi^b\eta^b\xi^a$, $\xi^b\eta^b\eta^a$, $\xi^b\xi^a\eta^a$ \\
\hline\hline
\end{tabular}
\caption{}\label{5sub}
\end{table}

Table~\ref{5sub} describes these three subalgebras
$\fm\fs V$ and their ambients $\fm\fs \widetilde V$.
\end{proof}

\subsection*{Acknowledgements}
I am thankful to A.L.~Oni\-shchik who raised the problem, and D.~Leites for help. A~part of the results of this note was announced in \cite{S1}; their proofs were sketched in Report no.~32/1988-15 of Department of Mathematics of Stockholm University to which I am thankful for hospitality.

\pdfbookmark[1]{References}{ref}
\LastPageEnding

\end{document}